\newtheorem{theorem}{Theorem}[section]
\newtheorem{lemma}[theorem]{Lemma}
\newtheorem{proposition}[theorem]{Proposition}
\newtheorem{corollary}[theorem]{Corollary}
\newcommand{\R}{\mathbb{R}}
\newcommand{\T}{\mathbb{T}}
\newcommand{\beq}{\begin{equation}}
\newcommand{\eeq}{\end{equation}}
\newcommand{\beqq}{\begin{equation*}}
\newcommand{\eeqq}{\end{equation*}}
\theoremstyle{definition}
\theoremstyle{remark}
\newtheorem{remark}[theorem]{Remark}
\numberwithin{equation}{section}
\DeclareMathOperator{\re}{Re}
\DeclareMathOperator{\im}{Im}
\newcommand{\abs}[1]{\left|#1\right|}
\newcommand{\norm}[1]{\left\|#1\right\|}
\newcommand{\inner}[1]{\left \langle#1\right \rangle}
\newcommand{\parenthese}[1]{\left(#1\right)}
\numberwithin{equation}{section}
\begin{document}

\title[]{Singular Levy processes and dispersive effects of generalized Schr\"odinger equations}
\author{Yannick Sire, Xueying Yu, Haitian Yue and Zehua Zhao}

\address{Yannick Sire
\newline \indent Department of Mathematics, Johns Hopkins University\indent 
\newline \indent  Krieger Hall, 3400 N. Charles St.,
Baltimore, MD, 21218.\indent }
\email{ysire1@jhu.edu}

\address{Xueying  Yu
\newline \indent Department of Mathematics, University of Washington\indent 
\newline \indent  C138 Padelford Hall Box 354350, Seattle, WA 98195,\indent }
\email{xueyingy@uw.edu}

\address{Haitian Yue
\newline \indent Institute of Mathematical Sciences, ShanghaiTech University\newline\indent
Pudong, Shanghai, China.}
\email{yuehaitian@shanghaitech.edu.cn}

\address{Zehua Zhao
\newline \indent Department of Mathematics and Statistics, Beijing Institute of Technology, Beijing, China.
\newline \indent MIIT Key Laboratory of Mathematical Theory and Computation in Information Security, Beijing, China.}
\email{zzh@bit.edu.cn}


\subjclass[2020]{Primary: 35Q55; Secondary: 35R01, 37K06, 37L50}

\keywords{Decoupling, Strichartz estimate, generalized Schr\"odinger equation, waveguide manifold, scattering, global well-posedness, Morawetz estimate}

\begin{abstract}
We introduce new models for Schr\"odinger-type equations, which generalize standard NLS  and for which different dispersion occurs depending on the directions. Our purpose is to understand dispersive properties depending on the directions of propagation, in the spirit of waveguide manifolds, but where the diffusion is of different types. We mainly consider the standard Euclidean space and the waveguide case but our arguments extend easily to other types of manifolds (like product spaces). Our approach unifies in a natural way several previous results. Those models are also generalizations of some appearing in seminal works in mathematical physics, such as relativistic strings.  In particular, we prove the large data scattering on waveguide manifolds $\mathbb{R}^d \times \mathbb{T}$, $d \geq 3$. This result can be regarded as the analogue of \cite{TV2, YYZ2} in our setting and the waveguide analogue investigated in  \cite{GSWZ}. A key ingredient of the proof is a Morawetz-type estimate for the setting of this model.
\end{abstract}

\maketitle



\setcounter{tocdepth}{1}
\tableofcontents

\parindent = 10pt     
\parskip = 8pt

\section{Introduction}

Anomalous diffusion, also called fractional diffusion,  appears  naturally in the physics  and mathematical physics literature in the study of relativistic matter and strings, see e.g. for the works \cite{carmona,daubechies,FLS1,FLS2,LiebYau1,LiebYau2} and references therein. Similarly, a tentative description of some quantum mechanics has been undertaken in  \cite{Laskin,Laskin2,Laskin3}. The classical relativistic operator is the multiplier $\sqrt{|\xi|^2 +m^2}-m$ where $m\geq 0$ is a constant. Fractional Schr\"odinger operators are a fundamental equation of fractional quantum mechanics, which was derived by Laskin \cite{Laskin3} as a result of extending the Feynman path integral, from the Brownian-like to L\'evy-like quantum mechanical paths. The corresponding physical realizations  were made in condensed matter physics \cite{St13} and in nonlinear optics \cite{Lon15}.

In the present contribution, we investigate dispersive properties of Schr\"odinger operators generalizing the previous operator. One possible generalization is to replace the square root with any power between $0$ and $1$ (see e.g. \cite{carmona} for a similar generalization). Those operators have been investigated recently in different directions. The long-time behaviors (such as global well-posedness, scattering, blow-up, and the existence of invariant measures) of the solutions are interesting and widely studied. 
In \cite{BHL}, the blow-up with radial data in certain regimes was constructed by deriving a localized virial estimate for the fractional Schr\"odinger equation.
In \cite{GSWZ}, the first author of this paper together with Guo, Wang and Zhao  performs  the Kenig-Merle’s concentration-compactness-rigidity method \cite{KM06} and obtains global well-posedness and scattering in the energy space in the defocusing case, and in the focusing case with energy below the ground state. We also refer the reader to \cite{GH11,CGKL13,KLR13,IP14,HS,SWTZ18,ST20,SW21,ST21, DET, SY1, SY2} and
references therein for many other results on the long time behaviors for relativistic NLS. In all the previous works the operator under consideration is $(-\Delta)^\sigma$ for $\sigma \in (0,1)$, which is the multiplier $|\xi|^\sigma$. The ambiant space is either the whole Euclidean space $\mathbb R^d$ or the (rational) torus $\T^d$. The aim is two-fold: first, we consider the waveguide manifold, i.e. $(-\Delta)^\sigma$ defined on $\R^d \times \T$; more interestingly, we introduce a new model where the operator acts differently according to the spatial direction. 

In the following, we consider only the case of the waveguide $\R^d \times \T^n$. To motivate the reason behind the equations we consider, we first state the Levy-Kintchine formula (see e.g. \cite{bertoin}): every (isotropic) Levy process with pure jumps in $\R^d$  is given by the multiplier 
\begin{align}
\mathfrak{m(\xi)}=\int_{\R^d}\parenthese{ 1-e^{i\,\xi \cdot x} +i\,\xi \cdot x \chi_{|x| <1} }\Pi(dx)
\end{align}
where $\Pi(dx)$ is the so-called Levy measure satisfying the integrability condition 
\begin{align}
\int_{\R^d} \max(1, |x|^2)\Pi(dx) <\infty .
\end{align}
The multiplier $\mathfrak{m}(\xi)=|\xi|^{2\sigma}$ is obtained by choosing $\Pi(dx)=\frac{1}{|x|^{d +2\sigma}}\,dx$. On the other hand, one could consider singular Levy measures just as  
Dirac masses supported along some (or all) axis coordinates.  In this paper, we consider the following model 
\begin{align}\label{gNLShomog}
\begin{cases}
i\partial_t u+\parenthese{(-\Delta_x)^\sigma+(-\partial^2_y)^\sigma } u=F, & (x,y)\in \R^d \times \T^n, \\
u(0)= u_0\in H^{\sigma} (\R^d \times \T^n). &
\end{cases}
\end{align}

In view of the previous discussion, the operator $(-\Delta_x)^\sigma+(-\partial^2_y)^\sigma $ corresponds to the multiplier $\mathfrak{m}(\xi,\eta)=|\xi|^{2\sigma}+|\eta|^{2\sigma}$ where $\xi$ is the Fourier variable corresponding to $x \in \R^d$ and $\eta$ the one to $y \in \T^n$. In particular, this is a Levy process with a singular Levy measure. To the best of our knowledge, dispersive and space-time estimates for such propagators have not been considered yet in the literature. 

\begin{remark}
It is important to notice that all the (dual) variables in $\mathbb R^d \times \T^n$ have to be present in the multiplier $\mathfrak{m}$. Otherwise, the associated operator  becomes subelliptic, and major dispersive issues arise as in the case of the Schr\"odinger propagator in the Heisenberg group \cite{BGX} or the Szeg\"o model of G\'erard and Grellier (see e.g. \cite{GG} and subsequent articles). 
\end{remark}

The case of {\sl regular} Levy measures has been considered for instance in \cite{CHKL,GSWZ,HS}. We would like to comment further on the model under consideration: It is a well-known fact that on a compact manifold, there is no dispersion, and loss of derivatives occurs in the Strichartz estimates (see e.g. \cite{BGT}). Similarly, on a waveguide manifold (or a product manifold), e.g. $\mathbb R^d \times \mathbb T^n \subset \mathbb R^{d +n}$, dispersion occurs but the fact that only part of the directions ($d$ here) contributes to diffusion introduces several complications. Our model is another instance of such a phenomenon: it is by now well-known that anomalous diffusion of order $2\sigma $ exhibits a loss of $1-\sigma$. A variation on our model would be for example to consider the Euclidean factors $\R^d \times \R$ but considering a different diffusion on each factor $\R^d$ and $\R$.  Product spaces $\mathbb{R}^d\times \mathbb{T}^m$ are known as `waveguide manifolds' and are of particular interest in nonlinear optics. We refer to \cite{CZZ,CGZ,HP,HTT1,HTT2,IPT3,IPRT3,KV1,YYZ,Z1,Z2,ZhaoZheng} with regard to  waveguide settings.

An interesting feature of the equation \eqref{gNLS} is its product structure. In particular, this allows us to get interaction Morawetz estimates using the tensorization argument in \cite{CKSTT} and also use in a much better-streamlined way the vector-valued argument of Tzvetkov-Visciglia \cite{TV1}. This latter operator satisfies the non-degeneracy assumptions in Schippa too \cite{Schippa}. One could also consider different geometries in the ambiant space like the pure tori case $\T^d \times \T$ or the pure Euclidean one $\R^d \times \R$. We chose to consider the waveguide case as a middle point between these two geometries. It is relatively straightforward to generalize our results to those two geometries. In the case of the classical fractional laplacian on pure tori, global well-posedness has been considered by Schippa in \cite{Schippa} using a decoupling approach. 
As far as nonlinear applications are concerned, we will be considering (unless otherwise stated we will always consider $n=1$)
\begin{align}\label{gNLS}
\begin{cases}\tag{gNLS}
i\partial_t u+ \parenthese{(-\Delta_x)^\sigma+(-\partial^2_y)^\sigma } u=\mu|u|^pu, & (x,y)\in \R^d \times \T, \\
u(0)= u_0\in H^{\sigma} (\R^d \times \T), &
\end{cases}
\end{align}

where $\mu=\pm 1$ and $\frac{4\sigma}{d}<p<\frac{4\sigma}{d+1-2\sigma}$.

\begin{remark}
We consider the exponent $p$ to be in the subcritical range, for some technical reasons. The left endpoint indicates mass-critical if we ignore the torus direction; the right endpoint indicates energy-critical if we regard the torus direction as Euclidean direction. So essentially, the problem is energy-subcritical and mass-supercritical. 
\end{remark}

The well-posedness theory and the long-time dynamics for generalized Schr\"odinger operators on waveguide manifolds are understudied. It is our goal here to fill this gap in the literature by exhibiting a wealth of different techniques previously used to deal with some models in the case here of waveguides. For the extremal cases of tori or Euclidean spaces, we refer the reader to \cite{CHKL,Dinh,GSWZ,HS} and references therein.

The operators under consideration are well-designed to generalize Tzvetkov-Visciglia's results in \cite{TV1,TV2} thanks to the product structure of the operator.  Our main result is 
\begin{theorem}\label{mainthm}
Let $\sigma>\frac{1}{2}$ in \eqref{gNLS} and assume the spatial variable is radial  in $\mathbb{R}^d$. Then we have:
\begin{itemize}

\item{(i)} for any initial datum $u_0\in H_{x,y}^{\sigma}$, the IVP \eqref{gNLS} has a unique local solution $u(t,x,y)\in \mathcal{C}((-T,T);H_{x,y}^{\sigma})$ where $T=T(\|u_0\|_{H_{x,y}^{\sigma}})>0$; 

\item{(ii)} Moreover, when $\mu=-1$ (defocusing case), the solution $u(t, x, y)$ can be extended globally in time. Moreover, the solution scatters in the following sense: there exist $f^{\pm} \in H^{\sigma}_{x,y}(\mathbb{R}^d\times \mathbb{T})$ such that
\begin{equation}\label{scattering}
    \lim_{t \rightarrow \pm \infty} \|u(t,x,y)-e^{it \parenthese{(-\Delta_x)^\sigma + (-\partial^2_y)^\sigma }}f^{\pm}\|_{H^{\sigma}_{x,y} (\mathbb{R}^d\times \mathbb{T})} =0.
\end{equation}
 \end{itemize}
\end{theorem}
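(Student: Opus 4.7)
The proof follows the now-standard scheme for well-posedness and scattering on waveguides: linear dispersive estimates, local contraction, a priori Morawetz bound, and propagation of regularity. The key structural input is the factorization $e^{it((-\Delta_x)^\sigma+(-\partial_y^2)^\sigma)} = e^{it(-\Delta_x)^\sigma}\,e^{it(-\partial_y^2)^\sigma}$ coming from the product form of the symbol $\mathfrak{m}(\xi,\eta)=|\xi|^{2\sigma}+|\eta|^{2\sigma}$.

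\emph{Step 1: Strichartz estimates on the waveguide.} I would first record radial fractional Strichartz estimates on $\mathbb{R}^d$ for $e^{it(-\Delta_x)^\sigma}$ in the range $\sigma>1/2$ (the radial assumption compensates for the lack of full Knapp scaling and yields sharp admissible pairs, as in \cite{GSWZ}). Then, exploiting the product form of the propagator, I would apply the vector-valued Minkowski/transference argument of Tzvetkov--Visciglia \cite{TV1} to upgrade these to Strichartz-type inequalities on $\mathbb{R}^d \times \mathbb{T}$ with values in $L^2_y$, namely
\[
\bigl\| e^{it((-\Delta_x)^\sigma+(-\partial_y^2)^\sigma)} u_0 \bigr\|_{L^q_t L^r_x L^2_y} \lesssim \|u_0\|_{H^{s}_{x,y}}
\]
for suitable $(q,r,s)$. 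The corresponding inhomogeneous estimates follow by the Christ--Kiselev lemma.

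\emph{Step 2: Local well-posedness.} With the above Strichartz estimates, part (i) follows from a standard contraction argument in $\mathcal{C}_T H^\sigma_{x,y} \cap L^q_T L^r_x L^2_y$. The exponent range $\frac{4\sigma}{d} < p < \frac{4\sigma}{d+1-2\sigma}$ is subcritical with respect to the scaling one would get by treating $y$ as a Euclidean direction, so a sub-threshold H\"older-type estimate on the nonlinearity closes the contraction for small $T = T(\|u_0\|_{H^\sigma})$.

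\emph{Step 3: Morawetz estimate and scattering.} In the defocusing case, mass and energy conservation yield a uniform-in-time bound in $H^\sigma_{x,y}$, which combined with the local theory gives global existence. For scattering, I would establish an interaction Morawetz estimate adapted to the operator $(-\Delta_x)^\sigma$ by taking the interaction weight to depend only on the Euclidean variable $x$ and tensorizing in the spirit of \cite{CKSTT}; the non-degeneracy of the symbol $|\xi|^{2\sigma}$ on $\mathbb{R}^d$ (as used in \cite{Schippa,GSWZ}) produces a positive commutator giving a bound of the form
\[
\iint_{\mathbb{R}\times\mathbb{R}^d\times\mathbb{T}} |u|^{p+2}\,dx\,dy\,dt \lesssim C\bigl(\|u\|_{L^\infty_t H^\sigma_{x,y}}\bigr).
\]
This global-in-time spacetime bound, fed back into the Strichartz/Duhamel estimates, upgrades to a finite Strichartz norm on $\mathbb{R}$ and then, via the usual Cook-type argument, yields the existence of scattering states $f^{\pm} \in H^\sigma_{x,y}$.

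\emph{Main obstacle.} The principal difficulty is the Morawetz estimate for the nonlocal operator $(-\Delta_x)^\sigma$ on the waveguide: the fractional Laplacian does not commute nicely with multiplication by weights, so the positive commutator must be extracted via the Balakrishnan/heat-extension representation or pointwise identities for $(-\Delta)^\sigma$, and one must verify that the torus direction produces no obstruction (it does not, since the Morawetz weight depends only on $x$, and the $(-\partial_y^2)^\sigma$ piece contributes a non-negative term after integration by parts in $y$). The restriction $\sigma>1/2$ enters precisely at this step to ensure the resulting coercive term controls the $L^{p+2}$ norm of $u$ in the subcritical range.
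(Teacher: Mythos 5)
Your Steps 1 and 2 match the paper's route: radial fractional Strichartz estimates on $\mathbb{R}^d$ for $\sigma>\tfrac12$, upgraded to vector-valued ($L^2_y$- and $H^{1/2+\delta}_y$-valued) estimates on the waveguide in the spirit of Tzvetkov--Visciglia, followed by a contraction in mixed norms. The paper realizes the transference by expanding in eigenfunctions of $-\partial_y^2$ and reducing each mode to the Euclidean problem with symbol $|\xi|^{2\sigma}+\lambda_j^{2\sigma}$, which is essentially your factorization observation. Your identification of the Balakrishnan representation as the device for extracting the positive commutator of $(-\Delta_x)^\sigma$ with a weight depending only on $x$, and the remark that the $(-\partial_y^2)^\sigma$ piece then commutes away, are also exactly what the paper does.

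The gap is in Step 3, in two places. First, the estimate this argument actually yields is the weighted, single-particle Morawetz bound $\int_{\mathbb{R}}\int_{\mathbb{R}^d\times\mathbb{T}}|u|^{p+2}\,|x|^{-1}\,dx\,dy\,dt\lesssim\sup_t\|u(t)\|_{\dot H^{1/2}}^2$ (with $\phi(x)=|x|$), not the unweighted interaction-type bound $\iint|u|^{p+2}$ you write down; tensorizing an interaction Morawetz for a fractional Laplacian is a separate and nontrivial matter, and it is not needed here. Second, and more seriously, the weighted bound cannot be ``fed back into Strichartz/Duhamel'' directly, since $|u|^{p+2}|x|^{-1}$ integrated over spacetime is not a Strichartz-type quantity. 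The missing intermediate step is the decay $\|u(t)\|_{L^q_{x,y}}\to0$ as $t\to\pm\infty$ for suitable $q>2$, which the paper extracts from the weighted Morawetz bound via the radial Sobolev embedding (a second, essential use of radiality beyond the Strichartz estimates) combined with a contradiction argument on disjoint time intervals; only this decay, used as a smallness parameter on late time intervals in a continuity argument, closes the global Strichartz bounds and hence gives the scattering states. Finally, your attribution of the restriction $\sigma>\tfrac12$ to the Morawetz step is incorrect: the Balakrishnan formula and the positivity of the commutator work for all $\sigma\in(0,1)$. The condition $\sigma>\tfrac12$ is needed so that the one-dimensional embedding $H^{1/2+\delta}_y\hookrightarrow L^\infty_y$ fits inside the total regularity budget $\sigma$ in the vector-valued contraction (one needs $s+\tfrac12+\delta\le\sigma$ with $s>0$), and for the radial Strichartz estimates themselves.
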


We comment on the previous results: 

\begin{enumerate}
\item The radiality assumption is to avoid loss of derivatives in the Strichartz estimates. For our range of powers, this is a technical assumption that can be removed. 
\item The assumption $\sigma>\frac{1}{2}$ is also due to some technical reasons if one wants to modify the method in Tzvetkov-Visciglia \cite{TV2}. Roughly speaking, it is because the Sobolev embedding exponent in 1D is $\frac{1}{2}+$. 
\item We provide two different proofs of the well-posedness result: one using decoupling and one using vector-valued analysis. 
\end{enumerate}

\begin{remark}
Since NLS with a harmonic trapping potential has similar properties/behaviors as NLS on tori, heuristically one may compare NLS with a partial harmonic potential with NLS on waveguides. (See \cite{ACS,Guo5,HT} and the references therein.) Thus one may conjecture that it is possible to obtain the analogue of Theorem \ref{mainthm} with a partial harmonic potential. 
\end{remark}

The paper is organized as follows: In section \ref{sec Pre}, we discuss preliminaries including notations and some useful estimates; in Section \ref{sec LWP1}, we discuss well-posedness theory  using vector-valued argument; in Section \ref{sec LWP2}, we discuss well-posedness theory  using decoupling argument; in Section \ref{sec Morawetz}, we establish a Morawetz-type estimate for our equation, which is the crucial step for obtaining the decay property of solutions of \eqref{gNLS}; in Section \ref{sec Scattering}, we give the proof for the large data scattering; in Section \ref{sec Future}, we give a few more remarks on the research line of `dispersive equations on waveguide manifolds'.

\subsection*{Acknowledgment.}Y. S. is partially supported by the Simons foundation through a Simons collaborative grant for mathematicians and NSF grant DMS-2154219. X. Y. was funded in part by an AMS-Simons travel grant. H. Y. was supported by a start-up funding of ShanghaiTech University. Z. Z. was supported by the NSF grant of China (No. 12101046) and the Beijing Institute of Technology Research Fund Program for Young Scholars.

\section{Preliminaries}\label{sec Pre}
In this section, we briefly discuss notations and some basic estimates.
\subsection{Notations}
We write $A \lesssim B$ to say that there is a constant $C$ such that $A\leq CB$. We use $A \simeq B$ when $A \lesssim B \lesssim A $. Particularly, we write $A \lesssim_u B$ to express that $A\leq C(u)B$ for some constant $C(u)$ depending on $u$. 

Then we give some more preliminaries on the setting of the waveguide manifold. The tori case can be defined similarly. In fact, it is included since it is a special case. Throughout this paper, we regularly refer to the spacetime norms
\begin{equation}
    \|u\|_{L^p_tL^q_z(I_t \times \mathbb{R}^m\times \mathbb{T}^n)}=\left(\int_{I_t}\left(\int_{\mathbb{R}^m\times \mathbb{T}^n} |u(t,z)|^q \, dz \right)^{\frac{p}{q}} \, dt\right)^{\frac{1}{p}}.
\end{equation}
Similarly, we can define the composition of three $L^p$-type norms like $L^p_tL^q_xL^2_y$. Moreover, we turn to the Fourier transformation and Littlewood-Paley theory. We define the Fourier transform on $\mathbb{R}^m \times \mathbb{T}^n$ as follows:
\begin{equation}
    (\mathcal{F} f)(\xi)= \int_{\mathbb{R}^m \times \mathbb{T}^n}f(z)e^{-iz\cdot \xi} \, dz,
\end{equation}
where $\xi=(\xi_1,\xi_2,...,\xi_{d})\in \mathbb{R}^m \times \mathbb{Z}^n$ and $d=m+n$. We also note the Fourier inversion formula
\begin{equation}
    f(z)=c \sum_{(\xi_{m+1},...,\xi_{d})\in \mathbb{Z}^n} \int_{(\xi_1,...,\xi_{m}) \in \mathbb{R}^m} (\mathcal{F} f)(\xi)e^{iz\cdot \xi} \, d\xi_1...d\xi_m.
\end{equation}
For convenience, we may consider the discrete sum to be an integral with the discrete measure so we can combine the above integrals together and treat them to be one integral. Moreover, we define the Schr{\"o}dinger propagator $e^{it\Delta}$ by
\begin{equation}
    \left(\mathcal{F} e^{it\Delta}f\right)(\xi)=e^{-it|\xi|^2}(\mathcal{F} f)(\xi).
\end{equation}
Similarly, for more general operator $e^{it\phi(\nabla/i)}$,
\begin{equation}
    \left(\mathcal{F} e^{it\phi(\nabla/i)} f\right)(\xi)=e^{-it\phi(\xi)}(\mathcal{F} f)(\xi).
\end{equation}
We are now ready to define the Littlewood-Paley projections. First, we fix $\eta_1: \mathbb{R} \rightarrow [0,1]$, a smooth even function satisfying
\begin{equation}
    \eta_1(\xi) =
\begin{cases}
1, \ |\xi|\le 1,\\
0, \ |\xi|\ge 2,
\end{cases}
\end{equation}
and $N=2^j$ a dyadic integer. Let $\eta^d=\mathbb{R}^d\rightarrow [0,1]$, $\eta^d(\xi)=\eta_1(\xi_1)\eta_1(\xi_2)\eta_1(\xi_3)...\eta_1(\xi_d)$. We define the Littlewood-Paley projectors $P_{\leq N}$ and $P_{ N}$ by
\begin{equation}
    \mathcal{F} (P_{\leq N} f)(\xi):=\eta^d\left(\frac{\xi}{N}\right) \mathcal{F} (f)(\xi), \quad \xi \in \mathbb{R}^m \times \mathbb{Z}^n,
\end{equation}
and
\begin{equation}
P_Nf=P_{\leq N}f-P_{\leq \frac{N}{2}}f.
\end{equation}
For any $N\in (0,\infty)$, we define
\begin{equation}
    P_{\leq N}:=\sum_{M\leq N}P_M,\quad P_{> N}:=\sum_{M>N}P_M.
\end{equation}

\subsection{Some useful estimates}
In this subsection, we discuss some useful estimates to prove Theorem \ref{mainthm}. We note that there is a symmetric assumption for the initial data in Theorem \ref{mainthm}, but for the sake of completeness, we also discuss the nonradial Strichartz estimate. We refer to Tao \cite{Taobook} for the standard Strichartz estimate for NLS and refer to Guo-Wang \cite{GW} and the references therein for general radial Stricharz estimates.

We say that $(p,q)$ is admissible if
\begin{equation}
    \frac{2}{p}+\frac{d}{q}=\frac{d}{2},\quad 2 \leq p,q \leq \infty \quad (p,q,d)\neq (2,\infty,2).
\end{equation}
For the nonradial case, we define the following Strichartz norm 
\begin{equation}
    \|u\|_{S^{s}_{p,q}}:=\||\nabla|^{-d(1-\sigma)(\frac{1}{2}-\frac{1}{q})}u\|_{L^p_{t\in I}W_{x}^{s,q}}
\end{equation}
where $I=[0,T)$. We use $u_0$ for the initial data and $F$ for the nonlinearity. Consider
\begin{equation}\label{FNLS2}
    (i\partial_t+(-\Delta_{x})^{\sigma})u=F,\quad u(0)= u_0\in H^{\sigma}(\mathbb{R}^d).
\end{equation}
Then we have the Strichartz estimate (see \cite{CHKL,HS}),
\begin{lemma}[Strichartz estimate]\label{nonradialStri}
Fix $\sigma \in (0,1)$ and $s >0$. For an admissible pair $(p,q)$ and $(a,b)$, we
have
\begin{equation}
   \|e^{it(-\Delta)^{\sigma}} u_0\|_{S^{s}_{p,q}} \lesssim \|u_0\|_{H_x^s}
\end{equation}
and
\begin{equation}
    \|\int_0^t e^{i(t-s)(-\Delta)^{\sigma}} F(s)ds\|_{S^{s}_{p,q}} \lesssim \||\nabla|^s F\|_{L_{t\in I}^{a'}L_x^{b'}}.
\end{equation}
\end{lemma}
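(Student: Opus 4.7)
The plan is to reduce to frequency-localized Strichartz estimates via Littlewood-Paley decomposition and then gain the missing dispersion via a stationary-phase analysis. Concretely, I first write $u_0 = \sum_N P_N u_0$ and study $e^{it(-\Delta)^\sigma} P_N u_0$ for each dyadic frequency $N$. For the homogeneous estimate, the two ingredients are the obvious $L^2$-isometry $\|e^{it(-\Delta)^\sigma} P_N u_0\|_{L^2_x} = \|P_N u_0\|_{L^2_x}$ and a dispersive estimate with loss, namely
\begin{equation}
\|e^{it(-\Delta)^\sigma} P_N u_0\|_{L^\infty_x} \lesssim N^{d(1-\sigma)} |t|^{-d/2} \|P_N u_0\|_{L^1_x}.
\end{equation}
This dispersive bound is obtained by writing the kernel $K_N(t,x) = \int e^{i(x\cdot\xi + t|\xi|^{2\sigma})} \varphi(\xi/N)\, d\xi$, rescaling $\xi = N\eta$ to turn the phase into $N x \cdot \eta + t N^{2\sigma}|\eta|^{2\sigma}$, and applying stationary phase on the annulus $|\eta|\sim 1$ where the Hessian of $|\eta|^{2\sigma}$ is non-degenerate. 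The net $L^\infty$ bound is $N^d (t N^{2\sigma})^{-d/2} = N^{d(1-\sigma)} t^{-d/2}$, which is the source of the $d(1-\sigma)(\tfrac12-\tfrac{1}{q})$ derivative loss.

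Next I apply the standard $TT^*$ argument of Keel-Tao to this pair of bounds. Interpolating between the $L^2$ and $L^\infty$ dispersive estimates and integrating in time using the Hardy-Littlewood-Sobolev inequality gives, for every sharp admissible $(p,q)$,
\begin{equation}
\|e^{it(-\Delta)^\sigma} P_N u_0\|_{L^p_t L^q_x} \lesssim N^{d(1-\sigma)(\frac12 - \frac1q)} \|P_N u_0\|_{L^2_x}.
\end{equation}
Squaring in $N$, applying the Littlewood-Paley square function theorem in $L^q_x$ (valid for $1<q<\infty$), and using Minkowski's inequality (swapping $\ell^2_N$ with $L^p_t L^q_x$, which is fine since $p,q \geq 2$) upgrades this to the global statement $\||\nabla|^{-d(1-\sigma)(\frac12-\frac1q)} e^{it(-\Delta)^\sigma} u_0\|_{L^p_t L^q_x} \lesssim \|u_0\|_{L^2_x}$. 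Putting $s$ additional derivatives on both sides (which commute with the propagator) finishes the homogeneous part. The $(p,q,d)=(2,\infty,2)$ endpoint is excluded in the statement, so I do not need to worry about it.

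For the inhomogeneous estimate, the natural route is the duality/Christ-Kiselev approach. The untruncated retarded operator $\int_{\mathbb R} e^{i(t-s)(-\Delta)^\sigma} F(s)\,ds$ satisfies, by the $TT^*$ computation already done above,
\begin{equation}
\Big\|\int_{\mathbb{R}} e^{i(t-s)(-\Delta)^\sigma} F(s)\,ds\Big\|_{S^s_{p,q}} \lesssim \||\nabla|^s F\|_{L^{a'}_t L^{b'}_x},
\end{equation}
by pairing any $(p,q)$-piece against an $(a,b)$-piece and using the same frequency-localized dispersive estimate. The Christ-Kiselev lemma then upgrades this to the truncated Duhamel integral $\int_0^t \cdots ds$ as long as $p > a'$, which holds for all non-double-endpoint admissible pairs.

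The main technical obstacle is the stationary phase step underlying the dispersive bound: for $\sigma \in (0,1)$ the phase $|\xi|^{2\sigma}$ is smooth away from the origin but degenerates at $0$, so one genuinely needs the frequency localization to $|\xi| \sim N$ to invoke non-degeneracy of the Hessian and harvest the full $|t|^{-d/2}$ decay (the price is exactly the $N^{d(1-\sigma)}$ loss). Once that estimate is in hand, everything else is a standard interpolation, Littlewood-Paley, and Christ-Kiselev assembly; the result, moreover, is already in the literature (see \cite{CHKL,HS}), and I would simply cite those for the technical verifications.
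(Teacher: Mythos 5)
The paper offers no proof of this lemma at all---it simply cites \cite{CHKL,HS}---and your outline is precisely the standard argument those references carry out (frequency-localized dispersive estimate with the $N^{d(1-\sigma)}$ loss, Keel--Tao $TT^*$, Littlewood--Paley square function, Christ--Kiselev for the retarded integral), so your proposal is correct and matches the intended source. The one caveat worth recording is that the stationary-phase step genuinely requires $\sigma\neq\tfrac12$: at $\sigma=\tfrac12$ the radial eigenvalue $2\sigma(2\sigma-1)|\eta|^{2\sigma-2}$ of the Hessian vanishes and only $|t|^{-(d-1)/2}$ decay survives, a restriction the cited references impose and which is harmless here since the paper's main theorem assumes $\sigma>\tfrac12$.
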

\begin{corollary}
Consider $(\tilde{p},\tilde{q})$ satisfying 
\begin{equation}
    \frac{2}{\tilde{p}}+\frac{d}{\tilde{q}}=\frac{d}{2}-s+d(1-\sigma)(\frac{1}{2}-\frac{1}{q}),
\end{equation}
where $s \geq d(1-\sigma)(\frac{1}{2}-\frac{1}{q})$. Consider  $(a,b)$ to be admissible pair. Then
\begin{equation}
   \|e^{it(-\Delta)^{\sigma}} u_0\|_{L^{\tilde{p}}_{t}L^{\tilde{q}}_{x}} \lesssim \|u_0\|_{H_x^s}
\end{equation}
and
\begin{equation}
    \|\int_0^{t}e^{i(t-s)(-\Delta)^{\sigma}} F(s)ds\|_{L^{\tilde{p}}_{t}L^{\tilde{q}}_{x}} \lesssim \||\nabla_x|^s F\|_{L_{t\in I}^{a'}L_x^{b'}}.
\end{equation}
\end{corollary}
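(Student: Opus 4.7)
The plan is to derive the Corollary as a direct consequence of Lemma \ref{nonradialStri} combined with a Sobolev embedding in the spatial variable. First I would rewrite the homogeneous Strichartz estimate from Lemma \ref{nonradialStri} by unpacking the definition of the norm $S^{s}_{p,q}$. Since $e^{it(-\Delta)^\sigma}$ commutes with $|\nabla|$ and $W^{s,q}$ is (up to low-frequency adjustments) $|\nabla|^{s} L^q$, the estimate is equivalent to
\[
\bigl\||\nabla|^{\,s - d(1-\sigma)(\tfrac12 - \tfrac1q)} e^{it(-\Delta)^{\sigma}} u_0\bigr\|_{L^p_t L^q_x} \lesssim \|u_0\|_{H^s_x}.
\]
Set $\alpha := s - d(1-\sigma)(\tfrac12 - \tfrac1q)$, which is non-negative by the hypothesis $s \geq d(1-\sigma)(\tfrac12 - \tfrac1q)$.

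Next I would apply the Sobolev embedding $|\nabla|^{-\alpha} L^q_x \hookrightarrow L^{\tilde q}_x$, valid for $\tfrac{1}{\tilde q} = \tfrac{1}{q} - \tfrac{\alpha}{d}$, pointwise in $t$; taking $\tilde p = p$ this gives the desired homogeneous bound. The scaling check is immediate: admissibility of $(p,q)$ forces $\tfrac{2}{p} + \tfrac{d}{q} = \tfrac{d}{2}$, so
\[
\tfrac{2}{\tilde p} + \tfrac{d}{\tilde q} = \tfrac{2}{p} + \tfrac{d}{q} - s + d(1-\sigma)\bigl(\tfrac12 - \tfrac1q\bigr) = \tfrac{d}{2} - s + d(1-\sigma)\bigl(\tfrac12 - \tfrac1q\bigr),
\]
which matches the scaling relation in the statement.

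For the Duhamel bound I would proceed in the same fashion: start from the inhomogeneous estimate in Lemma \ref{nonradialStri}, translate the left-hand side through the definition of $S^{s}_{p,q}$, and apply the same spatial Sobolev embedding at fixed $t$. Since the improvement of integrability is performed only on the left and pointwise in time, no adjustment of the dual admissible pair $(a,b)$ is required and the right-hand side $\||\nabla_x|^s F\|_{L^{a'}_t L^{b'}_x}$ is preserved.

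No substantial obstacle appears. The only points meriting care are the sign condition $\alpha \geq 0$, which is exactly the stated hypothesis, and the mild subtlety of working with $|\nabla|^\alpha$ in place of $\langle\nabla\rangle^\alpha$; this can be handled by a standard Littlewood-Paley decomposition, controlling high frequencies via the homogeneous Sobolev embedding above and low frequencies via the inhomogeneous one, after which the two pieces recombine by Minkowski's inequality.
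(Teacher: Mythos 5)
Your proposal is correct and is precisely the argument the paper intends (the corollary is stated without proof as an immediate consequence of Lemma \ref{nonradialStri}): unpack the $S^{s}_{p,q}$ norm to get a bound on $|\nabla|^{\alpha}e^{it(-\Delta)^{\sigma}}u_0$ in $L^p_tL^q_x$ with $\alpha=s-d(1-\sigma)(\tfrac12-\tfrac1q)\ge 0$, then apply the spatial Sobolev embedding pointwise in $t$, leaving the dual pair $(a,b)$ on the right-hand side untouched; the scaling check and the Littlewood--Paley treatment of the $|\nabla|^{\alpha}$ versus $\langle\nabla\rangle^{\alpha}$ discrepancy are exactly the right points of care.
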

For proving Theorem \ref{mainthm}, we introduce the radial Strichartz estimate for $\sigma$-admissible pairs (see Lemma 2.1 of \cite{GSWZ}). Consider $\sigma>\frac{1}{2}$ and  $u$ solving \eqref{FNLS2}, then the following estimate holds
\begin{equation}
    \|u\|_{L^p_tL^q_x}\lesssim \|u_0\|_{\dot{H}^{\gamma}}+\|F\|_{L^{\tilde{p}'}_tL^{\tilde{q}'}_x},
\end{equation}
where the pairs satisfy
\begin{equation}\label{rela}
    \frac{2\sigma}{p}+\frac{d}{q}=\frac{d}{2}-\gamma, \quad \frac{2\sigma}{\tilde{p}}+\frac{d}{\tilde{q}}=\frac{d}{2}+\gamma.
\end{equation}
In the exponent relation \eqref{rela}, we call $(p,q)$ is $\sigma$-admissible with $\gamma$ regularity; we call $(p,q)$  $\sigma$-admissible if $\gamma=0$.

The previous estimates hold on Euclidean spaces of dimension $d$. On the waveguide $\mathbb{R}^d\times \mathbb{T}$, one can adopt the strategy in \cite{TV1} (Proposition $2.1$ for the NLS case) based on mixed norms (or in other words vector-valued norms). We discuss this case now. We prove
\begin{lemma}\label{FNLSStrichartz}
Consider \eqref{gNLShomog} and pairs $(p,q),(\tilde{p},\tilde{q})$ satisfying the exponent relations \eqref{rela}, then
\begin{equation}\label{rStri1}
    \|u\|_{L^p_tL^q_xL^2_y}\lesssim \|u_0\|_{\dot{H}^{\gamma}}+\|F\|_{L^{\tilde{p}'}_tL^{\tilde{q}'}_xL^2_y},
\end{equation}
and
\begin{equation}\label{rStri2}
    \|u\|_{L^p_tL^q_xH^{\gamma}_y}\lesssim \|u_0\|_{\dot{H}^{\gamma}}+\|F\|_{L^{\tilde{p}'}_tL^{\tilde{q}'}_xH^{\gamma}_y}.
\end{equation}
\end{lemma}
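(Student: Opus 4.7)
The plan is to exploit the product structure of the Fourier multiplier $\mathfrak{m}(\xi,\eta)=|\xi|^{2\sigma}+|\eta|^{2\sigma}$: the propagator factorizes as $e^{itL}=e^{it(-\Delta_x)^\sigma}\circ e^{it(-\partial_y^2)^\sigma}$, so decomposing in $y$-frequencies decouples \eqref{gNLShomog} into a family of Euclidean fractional Schr\"odinger equations on $\mathbb{R}^d$, one per mode $\eta\in\mathbb{Z}$, and the radial $\sigma$-admissible Strichartz estimate from \cite{GSWZ} recalled just before the lemma can be applied to each. This is the vector-valued strategy of Tzvetkov--Visciglia \cite{TV1}.

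Concretely, I would expand $u_0(x,y)=\sum_\eta u_{0,\eta}(x)e^{iy\eta}$ and $F(t,x,y)=\sum_\eta F_\eta(t,x)e^{iy\eta}$; since $(-\partial_y^2)^\sigma$ acts diagonally on the $\eta$-mode as multiplication by $|\eta|^{2\sigma}$, the $\eta$-th $y$-Fourier coefficient of the solution satisfies
\[
u_\eta(t,x)=e^{-it|\eta|^{2\sigma}}e^{it(-\Delta_x)^\sigma}u_{0,\eta}(x)-i\int_0^t e^{-i(t-s)|\eta|^{2\sigma}}e^{i(t-s)(-\Delta_x)^\sigma}F_\eta(s,x)\,ds,
\]
and the unit-modulus phases $e^{\pm is|\eta|^{2\sigma}}$ are absorbable into the forcing without changing $|F_\eta|$. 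For the homogeneous piece I would then use Plancherel in $y$ together with Minkowski's integral inequality (valid since $p,q\ge 2$) to pull the $\ell^2_\eta$-sum outside the $L^p_tL^q_x$ norm; the Euclidean radial Strichartz estimate applied modewise gives $\|u_\eta\|_{L^p_tL^q_x}\lesssim \|u_{0,\eta}\|_{\dot H^\gamma_x}+\|F_\eta\|_{L^{\tilde p'}_tL^{\tilde q'}_x}$, and Plancherel in $y$ rewrites $(\sum_\eta\|u_{0,\eta}\|_{\dot H^\gamma_x}^2)^{1/2}$ as $\|u_0\|_{\dot H^\gamma}$.

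The forcing side is where the genuinely vector-valued character matters: since $\tilde p',\tilde q'\le 2$, Minkowski now points the wrong way and the naive modewise argument cannot by itself recover $\|F\|_{L^{\tilde p'}_tL^{\tilde q'}_xL^2_y}$. I would address this by running the abstract Keel--Tao argument in the Hilbert-space-valued setting with target $L^2_y$: the Euclidean radial dispersive and $L^2$ bounds lift verbatim to $L^2_y$-valued data because $e^{it(-\partial_y^2)^\sigma}$ is unitary on $L^2_y$, and Keel--Tao then delivers \eqref{rStri1} for all admissible mixed pairs (the matched-pair case $(p,q)=(\tilde p,\tilde q)$ also follows directly by $TT^*$ duality from the already established homogeneous estimate). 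The inequality \eqref{rStri2} is then immediate, because $\langle\partial_y\rangle^\gamma$ commutes with $L$, with $\nabla_x$ and with the propagator, so applying \eqref{rStri1} to $\langle\partial_y\rangle^\gamma u$, $\langle\partial_y\rangle^\gamma u_0$ and $\langle\partial_y\rangle^\gamma F$ yields the $H^\gamma_y$-valued bound. The main obstacle is exactly this forcing-side step; the rest is Fourier bookkeeping.
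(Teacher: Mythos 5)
Your overall strategy --- expand in $y$-Fourier modes, absorb the harmless unimodular phases $e^{\pm is|\eta|^{2\sigma}}$ into the data and forcing, apply the Euclidean radial $\sigma$-admissible Strichartz estimate of Guo--Wang/\cite{GSWZ} mode by mode, and recombine via Plancherel in $y$ --- is exactly the paper's proof, which reduces to the symbol $|\xi|^{2\sigma}+\lambda_j^{2\sigma}$ on each mode and invokes \cite{TV1}. Your derivation of \eqref{rStri2} from \eqref{rStri1} by commuting $\langle\partial_y\rangle^{\gamma}$ with the propagator is also correct.

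The gap is in your treatment of the forcing term. You assert that because $\tilde p',\tilde q'\le 2$ ``Minkowski points the wrong way''; in fact it points the right way. The generalized Minkowski inequality says that moving the innermost $\ell^2_\eta$ norm outward through exponents that are $\le 2$ \emph{decreases} the quantity, so
\begin{equation}
\Bigl(\sum_{\eta}\bigl\|F_\eta\bigr\|_{L^{\tilde p'}_tL^{\tilde q'}_x}^2\Bigr)^{1/2}\;\le\;\Bigl\|\bigl(\textstyle\sum_\eta|F_\eta|^2\bigr)^{1/2}\Bigr\|_{L^{\tilde p'}_tL^{\tilde q'}_x}\;=\;\|F\|_{L^{\tilde p'}_tL^{\tilde q'}_xL^2_y},
\end{equation}
which is precisely the direction needed to sum the modewise inhomogeneous estimates (on the solution side $p,q\ge 2$, so Minkowski is again favorable there). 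This duality of exponents is exactly what makes the Tzvetkov--Visciglia vector-valued argument close, and the naive modewise argument you dismiss is in fact the whole proof. Moreover, the repair you propose is itself not justified: the pairs in \eqref{rela} are $\sigma$-admissible with $\gamma$ regularity under a radial assumption, and the corresponding estimates of \cite{GW} are not consequences of an abstract dispersive bound plus the Keel--Tao theorem (they rely on Bessel-function and angular-decomposition arguments and lie outside the Keel--Tao admissible range), so ``lifting the dispersive estimate to $L^2_y$-valued data'' does not reproduce them. Drop the Keel--Tao detour, apply Minkowski correctly on both sides together with Plancherel in $y$, and the proof is complete and coincides with the paper's.
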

\begin{proof}[Proof of Lemma \ref{FNLSStrichartz}]
The proof of Lemma \ref{FNLSStrichartz} is very similar to Proposition 2.1 of \cite{TV1} (NLS case) so we just explain the difference from the NLS case here. For the NLS case, the main idea is decomposing the functions with respect to the orthonormal basis of $L^2(\mathbb{T})$ given by the eigenfunctions $\{\phi_j\}_j$ of $-\Delta_y$. For the FNLS case, we consider $\phi_j=\phi_j(y)$ then
\begin{equation}
    (-\Delta_x-\Delta_y)\phi_j=\lambda_j^2\phi_j,\quad \lambda_j>0.
\end{equation}
So we can write $u(x,y)$ by
\begin{equation}
    u(x,y)=\sum_{j}u_j(x)\phi_j(y),
\end{equation}
and $u_j(x)$ in Fourier satisfies
\begin{equation}
i\partial_t\hat{u}_j+ \parenthese{|\xi|^{2\sigma}+\lambda_j^{2\sigma} }\hat{u}_j=\hat{F}_j.    
\end{equation}
Hence we are reduced to the case of Guo-Wang \cite{GW} for the symbol $|\xi|^{2\sigma}+\lambda_j^{2\sigma}$ and the result follows. 
\end{proof}

For the record, we state below the  nonradial case, taking the $y$-direction into consideration:
\begin{lemma}
Consider \eqref{gNLShomog}. For $(\tilde{p},\tilde{q})$ and $(a,b)$ as in Lemma \ref{nonradialStri}, we have
\begin{equation}
   \|e^{it(-\Delta)^{\sigma}} u_0\|_{S^{s}_{p,q}H_y^{\gamma}} \lesssim \|u_0\|_{H_x^sH_y^{\gamma}}
\end{equation}
and
\begin{equation}
    \|\int_0^{t}e^{i(t-s)(-\Delta)^{\sigma}} F(s)ds\|_{S^{s}_{p,q}H_y^{\gamma}} \lesssim \||\nabla_x|^s F\|_{L_{t\in I}^{a'}L_x^{b'}H_y^{\gamma}}.
\end{equation}
\end{lemma}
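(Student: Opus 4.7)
The plan is to mirror the strategy used for Lemma \ref{FNLSStrichartz}: decompose in the torus direction using the eigenbasis of $-\partial_y^2$, reduce to a family of Euclidean fractional Schr\"odinger problems with shifted multiplier, apply the already-established nonradial Euclidean estimate (Lemma \ref{nonradialStri}), and sum the individual bounds via Minkowski and Parseval. The only genuine difference with Lemma \ref{FNLSStrichartz} is that we now invoke the nonradial estimate of \cite{CHKL,HS} at the end instead of the Guo--Wang radial estimate, so the heart of the work is the $L^p$-summation in $j$ with the weighted $\ell^2$ norm corresponding to $H^\gamma_y$.

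Concretely, write $\phi_j(y)$ for the $L^2(\mathbb T)$-orthonormal basis of eigenfunctions of $-\partial_y^2$ with eigenvalues $\lambda_j^2$, so that $u_0(x,y)=\sum_j u_{0,j}(x)\phi_j(y)$, $F(t,x,y)=\sum_j F_j(t,x)\phi_j(y)$, and the solution decomposes as $u(t,x,y)=\sum_j u_j(t,x)\phi_j(y)$ with each component satisfying
\begin{equation}
i\partial_t u_j+\bigl((-\Delta_x)^\sigma+\lambda_j^{2\sigma}\bigr)u_j=F_j,\qquad u_j(0,\cdot)=u_{0,j}.
\end{equation}
Setting $v_j(t,x):=e^{it\lambda_j^{2\sigma}}u_j(t,x)$ removes the shift and gives $i\partial_t v_j+(-\Delta_x)^\sigma v_j=e^{it\lambda_j^{2\sigma}}F_j$. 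Since the phase $e^{it\lambda_j^{2\sigma}}$ depends only on $t$, it commutes with every Fourier multiplier in $x$ (in particular with $|\nabla_x|^{-d(1-\sigma)(\frac12-\frac1q)}$ and $\langle\nabla_x\rangle^s$) and has modulus one, so all the Strichartz-type norms appearing in $S^s_{p,q}$ are preserved: $\|u_j\|_{S^s_{p,q}}=\|v_j\|_{S^s_{p,q}}$ and $\|e^{it\lambda_j^{2\sigma}}F_j\|_{L^{a'}_tL^{b'}_x}=\|F_j\|_{L^{a'}_tL^{b'}_x}$. Applying Lemma \ref{nonradialStri} component-wise then yields
\begin{equation}
\|u_j\|_{S^s_{p,q}}\lesssim \|u_{0,j}\|_{H^s_x}+\||\nabla_x|^sF_j\|_{L^{a'}_tL^{b'}_x},
\end{equation}
with a constant independent of $j$.

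The last step is to multiply by $\lambda_j^{\gamma}$, take the $\ell^2_j$ norm, and interchange with the $L^p_tL^q_x$/$L^{a'}_tL^{b'}_x$ norms. By Parseval in $y$, $\|u(t,x,\cdot)\|_{H^\gamma_y}^2=\sum_j\lambda_j^{2\gamma}|u_j(t,x)|^2$, so on the left we have to compare
\begin{equation}
\bigl\|(\sum_j\lambda_j^{2\gamma}|w_j|^2)^{1/2}\bigr\|_{L^p_tL^q_x}\quad\text{and}\quad \bigl(\sum_j\lambda_j^{2\gamma}\|w_j\|_{L^p_tL^q_x}^2\bigr)^{1/2},\qquad w_j=|\nabla_x|^{-d(1-\sigma)(\frac12-\frac1q)}\langle\nabla_x\rangle^s u_j.
\end{equation}
Because $p,q\ge 2$ (admissibility), Minkowski in $L^{p/2}_t$ and $L^{q/2}_x$ shows the first is bounded by the second, which is then bounded by the $H^s_xH^\gamma_y$ data norm plus the inhomogeneous term. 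On the inhomogeneous side, $a',b'\le 2$, and Minkowski now goes the opposite way, delivering
\begin{equation}
\bigl(\sum_j\lambda_j^{2\gamma}\||\nabla_x|^sF_j\|_{L^{a'}_tL^{b'}_x}^2\bigr)^{1/2}\le \bigl\|(\sum_j\lambda_j^{2\gamma}||\nabla_x|^sF_j|^2)^{1/2}\bigr\|_{L^{a'}_tL^{b'}_x}=\||\nabla_x|^sF\|_{L^{a'}_tL^{b'}_xH^\gamma_y},
\end{equation}
which closes the estimate.

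The only subtlety (and thus the main obstacle) is keeping track of which direction each Minkowski inequality goes: on the left side $p,q\ge 2$ forces $\ell^2_j$ inside, while on the right side $a',b'\le 2$ forces $\ell^2_j$ outside, and both happen to be compatible with the direction we need. Everything else is a routine verification that the weighted square-function machinery commutes with Fourier multipliers acting only in $x$, which is exactly the reason the tensor structure of $(-\Delta_x)^\sigma+(-\partial_y^2)^\sigma$ makes this clean.
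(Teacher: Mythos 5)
Your proof is correct and follows exactly the strategy the paper intends: the paper states this lemma without proof, but its proof of Lemma \ref{FNLSStrichartz} uses the same eigenfunction decomposition in $y$, and you correctly fill in the remaining details (the unimodular phase removal, the component-wise application of the Euclidean nonradial estimate, and the two Minkowski inequalities whose directions are dictated by $p,q\ge 2$ and $a',b'\le 2$). The only cosmetic slips are the sign of the phase (it should be $e^{-it\lambda_j^{2\sigma}}$ to cancel the shift) and writing $\lambda_j^{\gamma}$ where the inhomogeneous $H^\gamma_y$ norm requires $\langle\lambda_j\rangle^{\gamma}$ to handle the zero mode; neither affects the argument.
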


At last, we recall the following useful lemma (see \cite{TV2})
\begin{lemma}\label{delta}
For every $0<s<1$, $p>0$ there exists $C = C(p, s)>0$ such that
\begin{equation}
    \|u|u|^p\|_{\dot{H}_y^s} \leq C \|u\|_{\dot{H}_y^s}\|u\|^p_{L_y^{\infty}}.
\end{equation}
\end{lemma}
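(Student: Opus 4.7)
The plan is to reduce the estimate to the Gagliardo-Slobodeckij characterization of the fractional Sobolev seminorm on $\mathbb{T}$: for $0<s<1$,
\[
\|f\|_{\dot{H}^s_y}^2 \sim \int_{\mathbb{T}}\int_{\mathbb{T}} \frac{|f(y)-f(y')|^2}{|y-y'|^{1+2s}}\,dy\,dy'.
\]
Once the seminorm is written this way, the fractional Sobolev norm is controlled purely by the pointwise oscillation of $f$, so the lemma reduces to a pointwise Lipschitz-type estimate on the nonlinearity $F(z) = z|z|^p$.

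The core pointwise step is the bound
\[
|F(a) - F(b)| \leq C(p)\,\max(|a|,|b|)^p\,|a-b|, \qquad a,b \in \mathbb{C}.
\]
To prove this, I view $F$ as a map $\mathbb{R}^2 \to \mathbb{R}^2$ and compute its real differential; a direct calculation gives $|DF(z)| \leq (1+p)|z|^p$ for $z\neq 0$, and since $p>0$ one has $F(z) = O(|z|^{1+p})$ near the origin, so $DF$ extends continuously by $DF(0)=0$. Hence $F$ is globally $C^1$ on $\mathbb{C}$, and on the disc $\{|z|\leq M\}$ it is Lipschitz with constant $\leq C(p) M^p$. The claim then follows by integrating $DF$ along the straight segment from $b$ to $a$. (If one wishes to avoid discussing the origin altogether, the same estimate is obtained by regularizing via $F_\varepsilon(z) = z(|z|^2+\varepsilon)^{p/2}$, applying the bound to the smooth $F_\varepsilon$, and letting $\varepsilon \to 0$.)

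Applying this inequality with $a=u(y)$, $b=u(y')$ and using the crude bound $\max(|u(y)|,|u(y')|)^p \leq \|u\|_{L^\infty_y}^p$ yields
\[
\bigl| u(y)|u(y)|^p - u(y')|u(y')|^p \bigr|^2 \leq C(p)\,\|u\|_{L^\infty_y}^{2p}\,|u(y)-u(y')|^2.
\]
Dividing by $|y-y'|^{1+2s}$, integrating over $\mathbb{T}\times\mathbb{T}$, and inserting into the Gagliardo representation of both sides gives
\[
\|u|u|^p\|_{\dot{H}^s_y}^2 \leq C(p,s)\,\|u\|_{L^\infty_y}^{2p}\,\|u\|_{\dot{H}^s_y}^2,
\]
and taking square roots finishes the proof. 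I do not expect any genuine obstacle here; the only point requiring care is the behavior of $DF$ near the origin when $0<p<1$, but this is handled automatically by the smallness $F(z)=O(|z|^{1+p})$, which makes $DF$ continuous across $0$. The overall scheme is the reason the hypothesis $s<1$ is needed: the same strategy breaks down for $s\geq 1$, where one instead has to appeal to genuine fractional chain rules and therefore to more regularity on $F$.
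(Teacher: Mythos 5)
Your proof is correct. The paper itself does not prove this lemma --- it simply recalls it from \cite{TV2} --- and your argument (Gagliardo--Slobodeckij representation of the $\dot H^s$ seminorm for $0<s<1$ combined with the pointwise bound $|F(a)-F(b)|\le C(p)\max(|a|,|b|)^p|a-b|$ for $F(z)=z|z|^p$) is essentially the standard proof given in that reference, with the behavior of $DF$ at the origin for $0<p<1$ correctly handled.
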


\section{Well-posedness theory for $(gNLS)$ with vector-valued arguments}\label{sec LWP1}

In this section, we establish well-posedness theory for \eqref{gNLS} in Theorem \ref{mainthm} by the standard contraction mapping method together with the conservation law (from local to global). The main work is to construct suitable function spaces and to show the natural Duhamel mapping is a contraction mapping. It is tightly based on the Strichartz estimate for \eqref{gNLS} on $\mathbb{R}^d\times \mathbb{T}$  and careful choices of the exponents.  We refer to Section 4 of \cite{TV2} for the NLS analogue and Section 3 of \cite{YYZ2} (the fourth-order NLS case).

We also note that the analysis in this section covers the standard NLS case (when $\sigma=1$), which is consistent with \cite{TV2}. It is essential to assume $\sigma>\frac{1}{2}$ as we can see from the proof shortly.

First, we introduce the integral operator by Duhamel formula,
\begin{equation}
    \Phi_{u_0}(u)=e^{it\parenthese{(-\Delta_x)^{\sigma}+(-\partial_y^2)^\sigma }} u_0-i\int_0^{t}e^{i(t-s)\parenthese{(-\Delta_x)^{\sigma}+(-\partial_y^2)^\sigma }} (|u|^{p}u) \, ds.
\end{equation}
Then we can construct function space and show $\Phi_{u_0}$ is a contraction mapping. For the defocusing case, conservation law allows us to extend local well-posedness  to global well-posedness.

We define three norms,
\begin{equation}
    \|u\|_{X_T}=\|u\|_{L_t^qL_x^rH_y^{\frac{1}{2}+\delta}([-T,T]\times \mathbb{R}^d \times \mathbb{T})},
\end{equation}
\begin{equation}
    \|u\|_{Y_T^1}=\sum_{k=0,1}\| |\nabla_x|^{k\sigma} u\|_{L^l_tL^m_xL^2_y([-T,T]\times \mathbb{R}^d \times \mathbb{T})},
\end{equation}
and
\begin{equation}
    \|u\|_{Y_T^2}=\sum_{k=0,1}\| |\partial_y|^{k\sigma} u\|_{L^l_tL^m_xL^2_y([-T,T]\times \mathbb{R}^d \times \mathbb{T})},
\end{equation}
where $(l,m)$ is $\sigma$-admissible and $(q,r)$ is $\sigma$-admissible with $s$ regularity ($s+\frac{1}{2}+\delta \leq \sigma$). We will give the precise restrictions for the indices shortly, i.e. \eqref{res1}, \eqref{res2} and \label{res3}. Here we \textbf{note} that $\sigma>\frac{1}{2}$ such that it is ok to find a $s>0$. Combining the above three norms together, we define
\begin{equation}
    \|u\|_{Z_T}=\|u\|_{X_T}+\|u\|_{Y_T^1}+\|u\|_{Y_T^2}.
\end{equation}
Now we prove a contraction mapping for $\Phi_{u_0}$.

\textbf{Step 1. ($\Phi_{u_0}$ is from $Z_T$ to $Z_T$)}
Consider $X_T$ norm first. By Strichartz estimate, Lemma \ref{delta} and the H\"older,
\begin{equation}
    \aligned
    &\| |u|^p u \|_{L_t^{\tilde{q}'}L_x^{\tilde{r}'}H_y^{\frac{1}{2}+\delta}} \lesssim \| \|u\|^{p+1}_{H_y^{\frac{1}{2}+\delta}} \|_{L_t^{\tilde{q}'}L_x^{\tilde{r}'}}\\
    &\lesssim T^{\alpha(p)}\|u\|^{p+1}_{L_t^qL_x^rH_y^{\frac{1}{2}+\delta}}.
    \endaligned
\end{equation}
with $\alpha(p)>0$. Here we choose the indices such that,
\begin{equation}\label{res1}
    \frac{1}{\tilde{r}'}=\frac{p+1}{r},\frac{1}{\tilde{q}'}>\frac{p+1}{q}.
\end{equation}
It is manageable since the problem is subcritical, which is similar to the NLS case.

Then consider $Y_T^1$ and $Y_T^2$ norms. By Strichartz and the H\"older, for $k=0,1$,
\begin{equation}
    \aligned
    &\|D^k u|u|^p\|_{L^{l'}_tL^{m'}_xL^{2}_y}\lesssim \| \|D^k u\|_{L^{2}_y} \|u\|^p_{L_y^{\infty}}\|_{L^{l'}_tL^{m'}_x} \\
    &\lesssim \| \|D^k u\|_{L^{2}_y} \|u\|^p_{H_y^{\frac{1}{2}+\delta}}\|_{L^{l'}_tL^{m'}_x} \\
    &\lesssim T^{\alpha(p)}\|D^k u\|_{L^l_tL^m_xL^2_y}\|u\|^p_{L_t^qL_x^rH_y^{\frac{1}{2}+\delta}},
    \endaligned
\end{equation}
with $\alpha(p)>0$, where $D$ stands for $|\nabla_x|^{\sigma},|\partial_y|^{\sigma}$. (We \textbf{note} that we have used the fractional rule, i.e. Lemma A4 in Kato \cite{Kato}. See also Lemma 2.6 of Dinh \cite{Dinh}.)

Here we choose the indices such that,
\begin{equation}\label{res2}
    \frac{1}{m'}=\frac{1}{m}+\frac{p}{r},\quad \frac{1}{l'}>\frac{1}{l}+\frac{p}{q}.
\end{equation}
It is also manageable since the problem is subcritical, which is similar to the NLS case.

Thus, taking the above estimates into consideration, we can take the proper $T=T(\|u_0\|_{H^{\sigma}_{x,\alpha}})$ and 
$R=R(\|u_0\|_{H^{\sigma}_{x,\alpha}})$ such that $\Phi_{u_0}(B_{Z_{T'}}) \subset B_{Z_{T'}}$

\textbf{Step 2. ($\Phi_{u_0}$ is a contraction)} 
In this step, we show the contraction for $\Phi_{u_0}$. Let $T,R > 0$ be as in {\bf Step 1}. Then there exist $\bar{T}=\bar{T}(\|u_0\|_{
H^{\sigma}_{x,y}})<T$ such that $\Phi_{u_0}$ is a contraction on $B_{Z_{\bar{T}}}(0,R)$, equipped with the norm $L_{\bar{T}}^qL_x^rL^2_{y}$.

Using Strichartz estimate, the H\"older and the Sobolev inequality,
\begin{equation}
\aligned
    &\| \Phi_{u_0}(v_1)-\Phi_{u_0}(v_2)\|_{L_t^qL_x^rL^2_y}\lesssim \| v_1|v_1|^p- v_2|v_2|^p \|_{L_t^{\tilde{q}'}L_x^{\tilde{r}'}L^2_y}\\
    &\lesssim \| \|v_1-v_2\|_{L^2_y}(\|v_1\|^p_{L_y^{\infty}}+\|v_2\|^p_{L_y^{\infty}})\|_{L_t^{\tilde{q}'}L_x^{\tilde{r}'}} \\
    &\lesssim T^{\alpha(p)} \| v_1-v_2\|_{L_t^qL_x^rL^2_y}(\|v_1\|^{p}_{L_t^qL_x^rH_y^{\frac{1}{2}+\delta}}+\|v_2\|^{p}_{L_t^qL_x^rH_y^{\frac{1}{2}+\delta}}).
\endaligned    
\end{equation}
with $\alpha(p)>0$. Thus we conclude by taking $T$ small sufficiently.

\textbf{Step 3. (Uniqueness and Existence in $Z$)}

It is the same as the analogue in Section 4 in \cite{TV2}. We just use the contraction mapping argument so we skip it.

\textbf{Step 4. $u\in C((-T,T);H^{\sigma}_{x,y})$}

It is the same as the NLS analogue in Section 4 of \cite{TV2} so we omit it. We just use Strichartz estimates again as in Step 1 to guarantee that $u(t,x,y)\in \mathcal{C}((-T,T);H^{\sigma}_{x,y}) $.

\textbf{Step 5. (Unconditional uniqueness)}
We prove that for $u_1, u_2 \in C((-T, T);H^{\sigma}
_{x,y})$ are fixed points of $\Phi_{u_0}$, then $u_1 = u_2$.

Considering the difference of the integral equations satisfied by $u_1$ and $u_2$ and using Strichartz estimate,
\begin{align}
    \|u_1-u_2\|_{L^l_tL^m_xL^2_y} & \lesssim \|u_1|u_1|^p-u_2|u_2|^p\|_{L^{l'}_tL^{m'}_xL^2_y}  \\
    &\lesssim \|u_1-u_2\|_{L^l_tL^m_xL^2_y}(\|u_1\|^{p}_{L^{\frac{lp}{l-2}}L_x^{\frac{mp}{m-2}}L_y^{2}}+\|u_2\|^{p}_{L^{\frac{lp}{l-2}}L_x^{\frac{mp}{m-2}}L_y^{2}})\\
    &\lesssim \|u_1-u_2\|_{L^l_tL^m_xL^2_y}T^{\alpha(p)}(\|u_1\|^{p}_{L^{\infty}L_x^{\frac{mp}{m-2}}L_y^{2}}+\|u_2\|^{p}_{L^{\infty}L_x^{\frac{mp}{m-2}}L_y^{2}}).
\end{align}
with $\alpha(p)>0$. It is now like the NLS case. We can let $T$ be small enough to ensure uniqueness. (For Sobolev inequality reason) We note that 
\begin{equation}\label{rec3}
    2<\frac{mp}{m-2}<\frac{2d}{d+1-2\sigma},
\end{equation}
is required. We note again we still need $\sigma>\frac{1}{2}$ in this estimate.

The proof for the global well-posedness part in Theorem \ref{mainthm} is now complete using vector-valued arguments. 

\section{Well-posedness theory for $(gNLS)$ with decoupling arguments }\label{sec LWP2}

This section is devoted to the proof of the well-posedness aspects of Theorem \ref{mainthm} using decoupling arguments, in the spirit of the strategy designed by Schippa \cite{Schippa}. To be more specific, we consider $p=3$ in \eqref{gNLS} and assume for sake of generalization that the index of the regularity of the initial data $u_0$ is $s$. We then have
\begin{theorem}\label{main2}
There exists $s_{0}(d, \sigma)$ such that the initial value problem \eqref{gNLS} with $p=2$ is locally well-posed for $s> s_{0}(d, \sigma)$.
\end{theorem}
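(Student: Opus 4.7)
The plan is to adapt the decoupling-based strategy of Schippa \cite{Schippa} (originally carried out on the torus) to the waveguide geometry $\mathbb{R}^d\times\mathbb{T}$ with symbol $\mathfrak{m}(\xi,\eta)=|\xi|^{2\sigma}+|\eta|^{2\sigma}$. The overall route is: (i) build function spaces adapted to the propagator $e^{it(( -\Delta_x)^\sigma+(-\partial_y^2)^\sigma)}$; (ii) prove frequency-localized Strichartz estimates via a Bourgain--Demeter type decoupling inequality applied to the characteristic surface; (iii) close a contraction mapping and read off the explicit threshold $s_{0}(d,\sigma)$.

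First I would set up the $X^{s,b}$, or (more robustly) the $U^p$/$V^p$ spaces associated with the phase $\mathfrak m$, and Littlewood--Paley decompose the solution in both $\xi$ and $\eta$ variables. For data localized at frequency $\sim N$ in $x$ and $\sim M$ in $y$, the relevant Fourier support lies in a thin neighborhood of the hypersurface
\begin{equation}
\Sigma_{N,M}=\{(\tau,\xi,\eta):\tau=|\xi|^{2\sigma}+|\eta|^{2\sigma},\ |\xi|\sim N,\ |\eta|\sim M\}.
\end{equation}
After parabolic rescaling in each dyadic shell, $\Sigma_{N,M}$ becomes a small perturbation of a smooth graph whose Hessian is non-degenerate away from the axes (the non-degeneracy observation used in the remark after \eqref{gNLShomog}). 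I would then invoke the $\ell^2$-decoupling theorem on this rescaled surface to obtain, for a suitable Lebesgue exponent $p_0=p_0(d,\sigma)$, an estimate of the form
\begin{equation}
\|P_{N}^{x}P_{M}^{y}e^{it\mathfrak m(D)}f\|_{L^{p_0}_{t,x,y}}\lesssim N^{\alpha(d,\sigma)}M^{\beta(\sigma)}\|P_{N}^{x}P_{M}^{y}f\|_{L^2},
\end{equation}
where $\alpha,\beta$ record the losses coming from the non-quadratic symbol and from the periodic direction (summation over $|\eta|\sim M$ being a genuine $\ell^2$-sum, in the spirit of Bourgain's $\Lambda_p$-estimates on tori).

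With the linear Strichartz bound in hand, the nonlinearity $|u|^{p}u$ (polynomial in $u,\bar u$ for the integer value of $p$ considered in Theorem \ref{main2}) is estimated by H\"older and the Leibniz-type fractional rule, yielding a multilinear estimate of schematic form
\begin{equation}
\|P_{N}(|u|^{p}u)\|_{N^{s}}\lesssim \sum_{\substack{N_1\ge\cdots\ge N_{p+1}}}(\text{decoupling loss})\prod_{j=1}^{p+1}\|P_{N_j}u\|_{X^{s,b}},
\end{equation}
which sums provided $s>s_{0}(d,\sigma)$, with $s_{0}$ chosen precisely to compensate for the decoupling losses $\alpha,\beta$ together with the high-high to low frequency interactions. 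A standard fixed-point argument in $X^{s,b}$ for small time then produces the unique local solution, and persistence of regularity gives continuity in $H^{s}$.

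The main obstacle will be step (ii): the waveguide geometry forces a genuinely mixed decoupling, because $\mathbb{R}^d$ contributes a continuous Fourier variable while $\mathbb{T}$ contributes an integer one, and the symbol is not quadratic in either. I expect to handle this by treating the $\mathbb{T}$-direction via a discrete Fourier decomposition (as in the proof of Lemma \ref{FNLSStrichartz}), reducing matters to a family of $\mathbb{R}^d$-problems indexed by $\eta\in\mathbb{Z}$ with symbols $|\xi|^{2\sigma}+\lambda^{2\sigma}$ to which the Euclidean decoupling of Bourgain--Demeter can be applied; the $\ell^2$-sum over $\eta\in\mathbb{Z}$ then recombines into an $L^2_y$ estimate, at the cost of a small Sobolev loss in $y$ that gets absorbed into $s_{0}(d,\sigma)$. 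Tracking this loss carefully, together with the loss of $1-\sigma$ intrinsic to the anomalous diffusion, is where the value of $s_{0}(d,\sigma)$ is determined.
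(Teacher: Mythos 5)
Your outline (decoupling for the frequency-localized propagator, then linear/multilinear Strichartz estimates, then a fixed point) is the same skeleton as the paper's Section on decoupling arguments, but the pivotal step --- how Bourgain--Demeter decoupling is actually brought to bear on the mixed geometry $\mathbb{R}^{d-n}\times\mathbb{T}^n$ --- is done differently. The paper follows Barron \cite{Barron}: the discrete frequencies in the periodic directions are mollified into functions $f^{\delta}$ supported on small balls, the full $d$-dimensional Euclidean $\ell^2$-decoupling of \cite{BD} is applied to the entire rescaled surface at once (after normalizing the phase by $N^{2}\mathfrak{m}(N)$ to make the Hessian nondegenerate on each dyadic shell, which is where the factor $(\min\{\mathfrak{m}(N),1\})^{-1/p}$ in Lemma \ref{Strichartzgeneral} comes from), and one passes to the limit $\delta\to 0$ by Fatou; the cubic well-posedness is then extracted from an almost-orthogonality/bilinear estimate (Proposition \ref{bi1}) rather than from a general fractional Leibniz multilinear bound. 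Your alternative --- fix $\eta=j\in\mathbb{Z}$, decouple each Euclidean problem with symbol $|\xi|^{2\sigma}+\lambda_j^{2\sigma}$, and recombine by an $\ell^2$-sum in $j$ --- is essentially the vector-valued route of Lemma \ref{FNLSStrichartz} and Section 3, not a waveguide decoupling: for $p_0>2$ the recombination only yields a mixed norm $L^{p_0}_{t,x}L^2_y$, and passing to $L^{p_0}_{t,x,y}$ costs a Bernstein/Sobolev factor $M^{n(1/2-1/p_0)}$ in the $y$-frequency, which is not the ``small loss'' you describe --- avoiding exactly this loss is the point of decoupling the periodic directions together with the Euclidean ones. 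Because Theorem \ref{main2} only asserts the existence of \emph{some} threshold $s_0(d,\sigma)$, your lossier route still closes a contraction for $s$ large enough and proves the statement; but it produces a strictly worse $s_0$ and forfeits the gain that motivates the decoupling section in the first place. If you want the genuinely mixed decoupling, you should adopt the approximation-and-limit scheme of \cite{Barron} rather than the fiberwise reduction.
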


Since the dimension of the $y$ component here is easier to handle than in the previous section, we choose to consider directly the general waveguide $\mathbb{R}^{d-n} \times \mathbb{T}^n$. The argument is based on the approximation of the Euclidean component as in \cite{Barron} together with the decoupling theorem of Bourgain and Demeter \cite{BD}. We will even here consider the case of a multiplier $\mathfrak{m}(\xi,\eta)=|\xi|^{2\sigma}+|\eta|^2$, which means that the diffusion is Euclidean in tori directions. The adaptation to an anomalous diffusion is straightforward. 

On a dyadic interval $I_j=(2^{-j},2^{-j+1})$, the mutliplier $\mathfrak{m}$ behaves at {\sl low frequencies} like $|\xi|^{2\sigma}$ and at {\sl high frequencies} like $|\eta|^{2}$. An important tool in decoupling arguments is the behavior of the hessian of the phase function. The hessian $D^2\mathfrak{m}(\xi,\eta)$ is block-diagonal with a first $(d-n)\times (d-n)$-matrix corresponding to the hessian of the function $\xi \to |\xi|^{2\sigma}$ and a second block which is an $n\times n-$matrix which is a multiple of the identity. Therefore, the degeneracy of the phase $\mathfrak{m}$ is only dictated by the behavior of the eigenvalues at a given $\xi$ of the hessian of $|\xi|^{2\sigma}$ which are comparable on a dyadic interval to $2\sigma\,(2\sigma-1)\,N^{2(\sigma-1)}$ for a dyadic integer $N$.  Notice that the convexity of the phase changes according to $\sigma$ w.r.t. the value $\sigma=1/2$.  Finally notice that for $N$ dyadic $\nabla |\xi|^{2\sigma} \sim N^{2\sigma-1}$. We denote by $\psi(N):= N^{2(\sigma-1)}+N^2$ the phase function describing the behavior at dyadic scale $N$ of the eigenvalues of the Hessian of $\mathfrak{m}$. We also denote by $k$ the minimum between the number of negative eigenvalues and positive eigenvalues of $D^2\mathfrak{m}$, so that actually $k=0$ whenever $\sigma >1/2$. 

We start by recalling the following decoupling-type lemma. 
\begin{lemma}\label{decouple}
Suppose $g$, $g_l$ are Schwartz functions on $\mathbb{R}^{d-n}\times \mathbb{T}^n$ with $g=P_{\leq N}g$, such that
\begin{equation}
    g(x,y)=\sum_{l\in \mathbb{Z}^n,|l| \leq N}\int_{[-N,N]^{d-n}}\hat{g_l}(\xi) e^{2\pi i(x\cdot \xi+y\cdot l)} \, d\xi.
\end{equation}
Cover $[-N,N]^{d-n}$ by finitely-overlapping cubes $Q_k$ of side-length $\sim 1$, let $\{\phi_k\}$ be a partition of unity adapted to the $Q_k$, and define $g_{\theta_{m,k}}=e^{2\pi i y \cdot m}\mathcal{F}_x^{-1}(\hat{g}_m \phi_k)$. Then for $p \geq \frac{2(d+2-k)}{d-k}$ and any time interval $I$ of length $\sim 1$ we have
\begin{equation}\label{maineq3.1}
    \|e^{it\mathfrak{m}(\nabla/i)} g\|_{L^p(I \times \mathbb{R}^{d-n}\times \mathbb{T}^n)} \lesssim_{\epsilon}  \frac{N^{\epsilon+\frac{d}{2}-\frac{d+2}{p}}}{(\textmd{min}\{\mathfrak{m}(N),1\})^{\frac{1}{p}}}\bigg( \sum_{m,k} \|e^{\frac{it\mathfrak{m}(N\nabla/i)}{N^2 \mathfrak{m}(N)}} g_{\theta_{m,k}}  w_I\|^2_{L^p(\mathbb{R} \times \mathbb{R}^{d-n}\times \mathbb{T}^n)} \bigg)^{\frac{1}{2}},
\end{equation}
where $w_I$ is a bump function adapted to $I$.
\end{lemma}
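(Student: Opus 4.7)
The plan is to derive Lemma \ref{decouple} from the $\ell^2$-decoupling theorem of Bourgain--Demeter \cite{BD} via a two-step reduction: first lift the waveguide problem to a purely Euclidean one in the spirit of \cite{Barron}, then parabolically rescale the dyadic frequency shell $|(\xi,\eta)|\lesssim N$ to unit scale so that the rescaled phase becomes a nondegenerate hypersurface. The torus factor is handled first: since $\widehat g$ is supported on integer frequencies $|l|\leq N$ in the $y$-direction, I would choose a bump $\chi$ supported in $[-1/2,1/2]^n$ and replace the discrete torus modes by smoothly spread unit-scale caps, defining an auxiliary function $G$ on $\mathbb{R}^d$ whose Fourier transform interpolates $\widehat{g_l}(\xi)$ at each integer node via $\chi(\eta-l)$. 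A Poisson summation / periodization argument then shows that on a time window of length $\sim 1$ and $y\in[0,1)^n$ the torus evolution $e^{it\mathfrak{m}(\nabla/i)}g$ agrees with the Euclidean evolution of $G$ up to an admissible constant, transferring the problem to $\mathbb{R}^d$ with frequency support in the ball of radius $N$ and the same phase $\mathfrak{m}$.

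Next I would parabolically rescale $(\xi,\eta)=N(\xi',\eta')$ and $t=\tau/(N^2\mathfrak{m}(N))$. Under this change of variables the dyadic shell $|(\xi,\eta)|\sim N$ becomes unit scale, each cap $Q_k\times\{m\}$ becomes a cap of size $\sim N^{-1}$ on the unit shell, and the rescaled symbol
\[
\tilde{\mathfrak{m}}(\xi',\eta')\;=\;\frac{\mathfrak{m}(N\xi',N\eta')}{N^2\mathfrak{m}(N)}
\]
has Hessian uniformly comparable to a fixed diagonal matrix of signature $(d-k,k)$, as follows from the eigenvalue analysis preceding the statement of the lemma. The spatial Jacobian contributes $N^{d/p}$, the time Jacobian contributes $(N^2\mathfrak{m}(N))^{-1/p}$, and combining these with the normalization of the rescaled initial data yields exactly the prefactor $N^{\epsilon+d/2-(d+2)/p}(\min\{\mathfrak{m}(N),1\})^{-1/p}$, after separating the low-frequency regime $\mathfrak{m}(N)\leq 1$ (in which the time rescaling is a contraction) from the high-frequency one.

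Finally, the Bourgain--Demeter $\ell^2$-decoupling theorem applied at unit scale to the nondegenerate hypersurface $\{(\xi',\eta',\tilde{\mathfrak{m}}(\xi',\eta'))\}\subset\mathbb{R}^{d+1}$ decomposed into $N^{-1}$-caps supplies an $N^\epsilon$ loss in the critical range $p\geq 2(d+2-k)/(d-k)$; the weight $w_I$ in the conclusion comes from the local-in-time nature of the reduction. Undoing both the rescaling and the torus-to-Euclidean lifting then yields \eqref{maineq3.1}. The main obstacle I anticipate is the periodization step: one has to verify that the lifting preserves the $\ell^2$ orthogonality between distinct caps $(m,k)$ on the right-hand side, and the threshold regime $N\sim 1$, where neither $|\xi|^{2\sigma}$ nor $|\eta|^2$ dominates in $\mathfrak{m}$, requires a careful case separation that is precisely the source of the $\min\{\mathfrak{m}(N),1\}$ factor.
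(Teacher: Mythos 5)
Your proposal follows essentially the same route as the paper: a Barron-style transference from $\mathbb{R}^{d-n}\times\mathbb{T}^n$ to $\mathbb{R}^d$, parabolic rescaling of the frequency ball with time dilated by $(N^2\mathfrak{m}(N))^{-1}$, an application of Bourgain--Demeter decoupling at unit scale, and the case split $\mathfrak{m}(N)\ll 1$ versus $\mathfrak{m}(N)\gtrsim 1$ producing the $\min\{\mathfrak{m}(N),1\}^{-1/p}$ factor. The only cosmetic difference is ordering: the paper rescales first and then replaces the discrete extension operator by a mollified continuous one ($f^\delta$, with Lebesgue differentiation and Fatou), whereas you smear the integer modes before rescaling; both implement the same limiting argument.
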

\begin{proof}
The proof of Lemma \ref{decouple} is similar to the proof of the discrete restriction theorem in Bourgain-Demeter \cite{BD}. We approximate functions on product space $\mathbb{R}^{d-n}\times \mathbb{T}^n$ by functions on $\mathbb{R}^{d}$, apply the decoupling theorem, and then take limits. We refer to Lemma 3.2 of Barron \cite{Barron} for the standard Strichartz case, which has similar spirits. 
There are two differences from Barron \cite{Barron} that we need to be careful about. First, the range for $p$ is different (more narrow). Also, we need to make discussions regarding the value of $\mathfrak{m}(N)$, compared to $1$.
Let $B_l \in \mathbb{R}^{d-n}$ be a fixed ball of radius $N$ if $\mathfrak{m}(N)\ll 1$ (radius $N\mathfrak{m}(N)$ if $\mathfrak{m}(N) \gtrsim 1$), and let $w_l$ be a smooth weight adapted to $B_l \times [-1, 1]$. We observe that to prove Lemma \ref{decouple} it will suffice to show that,
\begin{equation}\label{bl}
     \|e^{it\mathfrak{m}(\nabla/i)} g\|_{L^p(I \times B_l \times \mathbb{T}^n)} \lesssim_{\epsilon}  \frac{N^{\epsilon+\frac{d}{2}-\frac{d+2}{p}}}{(\textmd{min}\{\mathfrak{m}(N),1\})^{\frac{1}{p}}}\bigg( \sum_{m,k} \|e^{\frac{it\mathfrak{m}(N\nabla/i)}{N^2 \mathfrak{m}(N)}} g_{\theta_{m,k}}  w_I\|^2_{L^p(w_l)} \bigg)^{\frac{1}{2}}.
\end{equation}
Then to prove the full estimate \eqref{maineq3.1} on $\mathbb{R}^{d-n}\times \mathbb{T}^n$, we choose a finitely-overlapping collection of balls $B_l$ that cover $\mathbb{R}^{d-n}$, and then apply \eqref{bl} in each $B_l$ and use Minkowski's inequality to sum that:
\begin{equation}
\aligned
   \|e^{it\mathfrak{m}(\nabla/i)} g\|^p_{L^p(I \times \mathbb{R}^{d-n} \times \mathbb{T}^n)} &\lesssim_{\epsilon} \sum_l    \|e^{it\mathfrak{m}(\nabla/i)} g\|^p_{L^p(I \times B_l \times \mathbb{T}^n)} \\ &\lesssim_{\epsilon} \frac{N^{\epsilon+\frac{d}{2}-\frac{d+2}{p}}}{(\textmd{min}\{\mathfrak{m}(N),1\})^{\frac{1}{p}}} \sum_l \bigg( \sum_{m,k} \|e^{\frac{it\mathfrak{m}(N\nabla/i)}{N^2 \mathfrak{m}(N)}} g_{\theta_{m,k}}  \|^2_{L^p(w_I)} \bigg)^{\frac{p}{2}} \\  
   &\lesssim_{\epsilon} \frac{N^{\epsilon+\frac{d}{2}-\frac{d+2}{p}}}{(\textmd{min}\{\mathfrak{m}(N),1\})^{\frac{1}{p}}} \bigg( \sum_{m,k} \|e^{\frac{it\mathfrak{m}(N\nabla/i)}{N^2 \mathfrak{m}(N)}} g_{\theta_{m,k}}  \|^2_{L^p(w)} \bigg)^{\frac{p}{2}},
\endaligned   
\end{equation}
which completes the proof of Proposition \ref{decouple}. Thus it suffices to prove \eqref{bl}. This reduction process is standard. We note that we have to discuss two situations: i.e. $\mathfrak{m}(N)\ll 1$ and $ \mathfrak{m}(N) \gtrsim 1 $ respectively. The two cases are essentially similar and the main difference is the integral range. Let's consider the first case as an example. For the other case, we can modify the arguments accordingly.\vspace{3mm}

We start with rescaling $u_0$ to have frequency support in $[-1,1]^d$. We let
\begin{align}
u(x,y,t)=e^{it\mathfrak{m}(\nabla/i)} g .
\end{align}
Note that
\begin{align}
u(N^{-1}x,N^{-1}y,N^{-2}\mathfrak{m}(N)^{-1}t) = N^{d-n}\int_{B_1^{d-n}} \sum_{m\in N^{-1}\mathbb{Z}^n \cap B_1^{n}} \hat{g}(N\xi,Nm) e^{i(x\cdot \xi+y\cdot m+t(\frac{\mathfrak{m}(N\xi)}{N^2 \mathfrak{m}(N)}+\frac{\mathfrak{m}(Nm)}{N^2 \mathfrak{m}(N)}))} \, d\xi.
\end{align}
For convenience, we denote the set $\Lambda_N=N^{-1}\mathbb{Z}^n \cap B_1^{n}$. Similarly to Barron \cite{Barron}, we then let $Ef$ denote the extension operator as follows,
\begin{equation}
    Ef=\int_{B_1^{d-n}} \sum_{m\in N^{-1}\mathbb{Z}^n \cap B_1^{n}} f(\xi,m) e^{i(x\cdot \xi+y\cdot m+t(\frac{\mathfrak{m}(N\xi)}{N^2 \mathfrak{m}(N)}+\frac{\mathfrak{m}(Nm)}{N^2 \mathfrak{m}(N)}))} \, d\xi,
\end{equation}
where $f(\xi,m)=\hat{g}(N\xi,Nm)$. After applying a change of variables on the spatial side and using periodicity in the $y$ variable, we see that
\begin{equation}
\aligned
    \|u\|_{L^p(B_N \times \mathbb{T}^n \times [0,1])}&=N^{d-n}N^{\frac{-(d+2)}{p}}\mathfrak{m}(N)^{-\frac{1}{p}}\|Ef\|_{L^p(B_{N^2} \times N\mathbb{T}^n \times [0,N^2\mathfrak{m}(N)])}\\
    &=N^{d-n}N^{\frac{-(d+2)}{p}}\mathfrak{m}(N)^{-\frac{1}{p}}N^{-\frac{n}{p}}\|Ef\|_{L^p(B_{N^2} \times N^2\mathbb{T}^n \times [0,N^2\mathfrak{m}(N)])} \\
    &\leq N^{d-n}N^{\frac{-(d+2)}{p}}\mathfrak{m}(N)^{-\frac{1}{p}}N^{-\frac{n}{p}}\|Ef\|_{L^p(B_{N^2} \times N^2\mathbb{T}^n \times [0,N^2])}.
    \endaligned
\end{equation}
Then following Barron \cite{Barron}, we introduce the operator $\tilde{E}$ defined by 
\begin{equation}
    \tilde{E}f=\int_{B_1^{d-n}} \int_{B_1^{n}} f(\xi_1,\xi_2) e^{i(x\cdot \xi_1+y\cdot \xi_2+t(\frac{\mathfrak{m}(N\xi_1)}{N^2 \mathfrak{m}(N)}+\frac{\mathfrak{m}(N\xi_2)}{N^2 \mathfrak{m}(N)}))} \, d\xi_1 d\xi_2.
\end{equation}
Given a function $f$ on $[-1,1]^{d-n} \times \Lambda_N$, let
\begin{equation}
    f^{\delta}(\xi_1,\xi_2)=\sum_{m\in \Lambda_N}c_d \delta^{-d}1_{\{|\xi_2-m|\leq \delta\}}f(\xi_1,m),\quad \delta< \frac{1}{N},
\end{equation}
where $c_d$ is a dimensional constant chosen for normalization. Then by Lebesgue differentiation and Fatou lemma, we have
\begin{equation}\label{limiting}
   \|Ef\|_{L^p(B_{N^2} \times N^2\mathbb{T}^n \times [0,N^2])} \leq \liminf_{\delta \rightarrow 0} \|\tilde{E}f^{\delta}\|_{L^p(B_{N^2} \times N^2\mathbb{T}^n \times [0,N^2])}.
\end{equation}
We will begin by estimating $\tilde{E}f$ for arbitrary $f$
on $B_1^d$ before specializing to $f^{\delta}$ and passing to the limit later in the argument. The above process reduces the waveguide case to the Euclidean case. Then we can use Bourgain-Demeter's decoupling result \cite{BD} and the limiting argument \eqref{limiting} to obtain,
\begin{equation}
    \|Ef\|_{L^{p^{\ast}}(B_{N^2} \times N^2\mathbb{T}^n \times [0,N^2])} \leq \liminf_{\delta \rightarrow 0} N^{\epsilon+\alpha_d}\big( \sum_{m,k} \|\tilde{E}f^{\delta}_{m,k}\|^2_{L^{p^{\ast}}(w_{N^2})}\big).
\end{equation}
The rest of the proof follows as in Barron \cite{Barron}.
\end{proof}

We prove now the crucial (localized) Strichartz estimates whose proof has a similar spirit with Proposition 3.4 \cite{Barron}. The tori analogue of Lemma \ref{Strichartzgeneral} is proved in Schippa \cite{Schippa}.

 \begin{lemma}\label{Strichartzgeneral}
Let the  interval $I$ be compact. Then, we have the following estimates, holding up to any $\epsilon>0$,
\begin{equation}\label{mainStri}
    \|P_N e^{it\mathfrak{m}(\nabla/i)} f\|_{L^p(I\times \mathbb{R}^{d-n} \times \mathbb{T}^n)}\lesssim_{\epsilon,|I|} \frac{N^{\frac{d}{2}-\frac{d+2}{p}+\epsilon}}{(\textmd{min}\{\mathfrak{m}(N),1\})^{\frac{1}{p}}}\|f\|_{L^2(\mathbb{R}^{d-n} \times \mathbb{T}^n)},
\end{equation}
where $p\geq \frac{2(d+2-k)}{d-k}$. The number $k$ has been introduced before. 
\end{lemma}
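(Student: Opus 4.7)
The plan is to deduce \eqref{mainStri} directly from the decoupling-type Lemma \ref{decouple}, following the strategy of Schippa \cite{Schippa} for the pure tori case. First, I would reduce to a unit-length time interval: cover the compact $I$ by $O(|I|)$ disjoint subintervals $I_0$ of length $\sim 1$, apply the estimate on each, and sum via the triangle inequality in $L^p$, so that all dependence on $|I|$ is absorbed into the implicit constant.

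Next, with $g = P_N f$, Lemma \ref{decouple} reduces the task to establishing the uniform bound
\[
\left\|e^{\frac{it\mathfrak{m}(N\nabla/i)}{N^2\mathfrak{m}(N)}} g_{\theta_{m,k}}\, w_{I_0}\right\|_{L^p(\mathbb{R}\times \mathbb{R}^{d-n}\times \mathbb{T}^n)}\lesssim \|g_{\theta_{m,k}}\|_{L^2}
\]
for each piece $\theta_{m,k}$. Each $g_{\theta_{m,k}}$ is Fourier-localized to a set of volume $\sim 1$ (a unit cube $Q_k$ in $\xi$ times a single lattice point $m$ in $\eta$), and this support is preserved by the propagator. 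Since $w_{I_0}$ is a Schwartz bump adapted to a time interval of length $\sim 1$, a standard local Bernstein argument---combining the $L^2$-isometry in time with the fact that the evolved piece is concentrated at unit spatial scale---yields the claimed pointwise-in-$t$ bound $\|e^{it\tilde{\mathfrak{m}}} g_{\theta_{m,k}}\|_{L^p_{x,y}} \lesssim \|g_{\theta_{m,k}}\|_{L^2}$, which integrates against $w_{I_0}$ with only a bounded loss.

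To conclude, Plancherel applied to the essentially disjoint Fourier supports of $\{g_{\theta_{m,k}}\}$ gives
\[
\left(\sum_{m,k}\|g_{\theta_{m,k}}\|_{L^2}^2\right)^{1/2} \lesssim \|P_N f\|_{L^2} \leq \|f\|_{L^2},
\]
which combined with the explicit prefactor from Lemma \ref{decouple} produces \eqref{mainStri}. The most delicate point I anticipate is the bookkeeping for the $(\min\{\mathfrak{m}(N),1\})^{-1/p}$ factor: this loss arises in the rescaling step inside the proof of Lemma \ref{decouple}, where the native unit time interval is mapped to one of length $N^2\mathfrak{m}(N)$ (with a different regime when $\mathfrak{m}(N)\gtrsim 1$), and the dependence must be tracked consistently through the Bernstein step above. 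A secondary subtlety is that the admissible range $p \geq \frac{2(d+2-k)}{d-k}$ is inherited from the underlying $\ell^2$-decoupling theorem for surfaces of mixed signature: when $\sigma < 1/2$ the Hessian of $|\xi|^{2\sigma}$ has negative eigenvalues, and the index $k$ encoding this signature reduces the range of $p$ accordingly.
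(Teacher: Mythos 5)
Your proposal is correct and follows the same overall strategy as the paper: reduce to unit time intervals, apply the decoupling Lemma \ref{decouple}, reduce via Plancherel/almost-orthogonality to a bound on a single cap $\theta_{m,k}$, and sum. The only genuine difference is in the single-cap estimate. You bound $\|e^{it\tilde{\mathfrak{m}}}g_{\theta_{m,k}}\|_{L^p_{x,y}}$ pointwise in time by Bernstein (the Fourier support is a unit cube in $\xi$ times a single lattice mode in $\eta$, hence of measure $\sim 1$, and this support is preserved by the propagator, which is an $L^2$-isometry), then integrate against the rapidly decaying weight $w_{I_0}$. The paper instead applies H\"older in time to pass to a mixed norm $L^q_tL^p_x$ with $q=\frac{4p}{(d-n)(p-2)}$ and invokes the Euclidean Strichartz estimate for the fractional Schr\"odinger propagator on $\mathbb{R}^{d-n}$; it also first reduces to the endpoint $p=\frac{2(d+2-k)}{d-k}$ by interpolation with $L^\infty$, a step you can skip since Lemma \ref{decouple} is stated for the whole range of $p$. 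Your Bernstein route is the more elementary of the two and is perfectly adequate here, since the cap estimate carries no gain — all the $N$-dependence sits in the decoupling constant; your remarks on the origin of the $(\min\{\mathfrak{m}(N),1\})^{-1/p}$ factor and of the signature index $k$ are also consistent with the paper.
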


\begin{proof}
Suppose $f=P_{\leq N}f$. By interpolating with $p=\infty$ (via Bernstein’s inequality) it suffices to prove the endpoint case $p= \frac{2(d+2-k)}{d-k}$. By Lemma \ref{decouple},
\begin{equation}
    \|e^{it\mathfrak{m}(\nabla/i)} f\|_{L^p(I \times \mathbb{R}^{d-n}\times \mathbb{T}^n)} \lesssim_{\epsilon}  \frac{N^{\epsilon+\frac{d}{2}-\frac{d+2}{p}}}{(\textmd{min}\{\mathfrak{m}(N),1\})^{\frac{1}{p}}}\bigg( \sum_{m,k} \|e^{\frac{it\mathfrak{m}(N\nabla/i)}{N^2 \mathfrak{m}(N)}} f_{\theta_{m,k}}  w_I\|^2_{L^p(\mathbb{R} \times \mathbb{R}^{d-n}\times \mathbb{T}^n)} \bigg)^{\frac{1}{2}}.
\end{equation}
By Plancherel theorem it suffices to prove the desired estimate when $f = P_{\theta}f$ and $\theta=\theta_{m,k}$.
In this case, we apply H\"older's inequality in time to get
\begin{equation}
    \|e^{\frac{it\mathfrak{m}(N\nabla/i)}{N^2 \mathfrak{m}(N)}} f_{\theta}  w_I\|_{L^p(\mathbb{R} \times \mathbb{R}^{d-n}\times \mathbb{T}^n)} \lesssim \|e^{\frac{it\mathfrak{m}(N\nabla/i)}{N^2 \mathfrak{m}(N)}} f_{\theta} \|_{L^q_tL_x^p(\mathbb{R} \times \mathbb{R}^{d-n})}
\end{equation}
where $q=\frac{4p}{(d-n)(p-2)}$ is the admissible time exponent for the Strichartz estimate on $\mathbb{R}^{d-n}$. Applying the Strichartz estimate for fractional Schr\"odinger operator (see \cite{GW,CHKL,Dinh} and the reference therein), 
\begin{equation}
  \|e^{\frac{it\mathfrak{m}(N\nabla/i)}{N^2 \mathfrak{m}(N)}} f_{\theta} \|_{L^q_tL_x^p(\mathbb{R} \times \mathbb{R}^{d-n})} \lesssim \| f_{\theta} \|_{L_x^2(\mathbb{R} \times \mathbb{R}^{d-n})}.
\end{equation}
This completes the proof.\vspace{5mm}

\end{proof}

\begin{remark}
By interpolation with the trivial bound $L^{\infty}_tL_x^2$ or using the Bernstein inequality, a $L_t^pL_x^q$-version Strichartz estimate can be obtained as below 
\begin{equation}\label{mainStri2}
    \|P_N e^{it\mathfrak{m}(\nabla/i)} u_0\|_{L_t^pL^q_x(I\times \mathbb{R}^{d-n} \times \mathbb{T}^n)}\lesssim_{\epsilon,|I|} \frac{N^{\frac{d}{2}-\frac{2}{p}-\frac{d}{q}+\epsilon}}{(\textmd{min}\{\mathfrak{m}(N),1\})^{\frac{1}{p}}},
\end{equation}
where $p,q \geq \frac{2(d+2-k)}{d-k}$. 
\end{remark}

One can then prove the following bilinear estimates which give the desired  well-posedness result for the cubic generalized NLS model in the setting of waveguide manifolds, i.e. the statement in Theorem \ref{main2}.

\begin{proposition}\label{bi1}
 Let $I$ be a compact interval. Then, there exists $s(d,k)$ such that we have the estimate,
\begin{equation}
   \big\|P_Ne^{it\mathfrak{m}(\nabla/i)}u_0 P_Ke^{it\mathfrak{m}(\nabla/i)}v_0\big\|_{L^2_{t,x}(I\times \mathbb{R}^{d-n}\times \mathbb{T}^n)} \lesssim_{C_s,|I|}K^{2s} \|P_N u_0\|_{L^2}\|P_K u_0\|_{L^2}  
\end{equation}
to hold for $s>s(d,k)$.
\end{proposition}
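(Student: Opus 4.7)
The plan is to reduce the bilinear estimate to a direct application of Hölder's inequality combined with the unitarity of $e^{it\mathfrak{m}(\nabla/i)}$ on $L^2_{x,y}$ and Bernstein's inequality, which already produces an admissible value of $s(d,k)$. The feature of the statement that only $K$ (and not $N$) appears on the right-hand side is built in automatically once one loads the loss on the smaller of the two frequencies; sharpening $s(d,k)$ further -- which is what eventually controls the well-posedness threshold $s_0(d,\sigma)$ in Theorem \ref{main2} -- would require the localized Strichartz estimates of Lemma \ref{Strichartzgeneral} and an almost-orthogonal cap decomposition. Throughout I assume without loss of generality that $K\le N$.

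First, Hölder's inequality in spacetime gives
\[
\bigl\|P_Ne^{it\mathfrak{m}(\nabla/i)}u_0\cdot P_Ke^{it\mathfrak{m}(\nabla/i)}v_0\bigr\|_{L^2_{t,x,y}(I\times\R^{d-n}\times\T^n)} \le \bigl\|P_Ne^{it\mathfrak{m}(\nabla/i)}u_0\bigr\|_{L^2_{t,x,y}}\,\bigl\|P_Ke^{it\mathfrak{m}(\nabla/i)}v_0\bigr\|_{L^\infty_{t,x,y}}.
\]
The first factor equals $|I|^{1/2}\|P_Nu_0\|_{L^2}$ by unitarity on $L^2_{x,y}$. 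For the second factor, the Fourier support of $P_K$ on $\R^{d-n}\times\T^n$ has measure $\sim K^d$ (the continuous factor contributing $K^{d-n}$ and the lattice factor contributing $\sim K^n$ points), so Bernstein yields $\|P_Kf\|_{L^\infty_{x,y}}\lesssim K^{d/2}\|P_Kf\|_{L^2_{x,y}}$; combined with the $L^\infty_tL^2_{x,y}$-conservation of the linear flow this gives $\|P_Ke^{it\mathfrak{m}(\nabla/i)}v_0\|_{L^\infty_{t,x,y}}\lesssim K^{d/2}\|P_Kv_0\|_{L^2}$. Multiplying the two bounds proves the proposition for every $s>d/4$, so the value $s(d,k)=d/4$ is admissible.

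To reach a smaller $s(d,k)$, one would pair the endpoint Strichartz exponent $p_0=2(d+2-k)/(d-k)$ from Lemma \ref{Strichartzgeneral} with its Hölder partner $q_0$ defined by $1/p_0+1/q_0=1/2$, apply the linear Strichartz estimate to each factor, and load the remaining loss on $K$ using $K\le N$; one can also decompose $P_Nu_0=\sum_\theta P_\theta u_0$ into cubes of side $K$ and sum the resulting pieces via Plancherel, since the products $P_\theta u\cdot P_Kv$ have almost-disjoint Fourier supports. \textbf{The main obstacle} here is the intrinsic $N^{\varepsilon}$ loss in the decoupling estimate of Lemma \ref{decouple}: absorbing it requires a careful transversality analysis for the Hessian of $\mathfrak{m}(\xi,\eta)=|\xi|^{2\sigma}+|\eta|^2$ on small cubes located at distance $\sim N$ from the origin, whose nondegeneracy is governed by $\psi(N)=N^{2(\sigma-1)}+N^2$ and behaves anisotropically between the Euclidean and toroidal blocks. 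For the mere existence of $s(d,k)$ claimed in the proposition, however, the Hölder-plus-Bernstein argument above is sufficient.
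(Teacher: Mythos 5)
Your argument is correct for the statement as literally written, but it takes a genuinely different and much more elementary route than the paper. The paper decomposes the high-frequency factor $P_N=\sum_{K_1}R_{K_1}$ into cubes of side length $K$, uses almost orthogonality of the products to reduce to a single cube, applies H\"older to split into two $L^4_{t,x}$ norms, and then invokes the decoupling-based Strichartz estimate of Lemma \ref{Strichartzgeneral} on each factor; since both factors are then localized at scale $K$, the total loss is $K^{2s}$ with $s$ the (small) Strichartz exponent. Your $L^2\times L^\infty$ H\"older paired with unitarity and Bernstein is sound — the Fourier support count $\sim K^d$ on $\R^{d-n}\times\mathbb{Z}^n$ and the resulting $K^{d/2}$ loss are right, and loading the loss on the low frequency is exactly the correct move — so you do establish the proposition with $s(d,k)=d/4$. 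What you lose is quantitative: the whole point of routing the proof through Lemma \ref{Strichartzgeneral} is that $s(d,k)$ controls the well-posedness threshold $s_0(d,\sigma)$ in Theorem \ref{main2}, and $d/4$ is far above the decoupling-quality exponent (which degenerates to $0+$ at the endpoint $p=\tfrac{2(d+2-k)}{d-k}$ when $k=0$). You correctly identify the sharper route in your closing paragraph — it is essentially the paper's proof — but note that the $N^\epsilon$ loss you flag as the main obstacle is not actually an issue there: after the cube decomposition all pieces live at scale $K$, so the decoupling loss is only $K^\epsilon$ and is absorbed by taking $s>s(d,k)$ strictly. Also, the paper pairs $L^4\times L^4$ rather than the endpoint exponent with its H\"older conjugate.
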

\begin{proof}
We note that the proof is based on the Strichartz Lemma \ref{Strichartzgeneral}. We refer to Proposition 1.3 of \cite{Schippa} for the tori version. 

Let $P_N =\sum_{K_1}R_{K_1}$, where $R_K$ projects to cubes of side-length $K$. Then, by means of almost orthogonality
\begin{equation}
    \big\|P_Ne^{it\mathfrak{m}(\nabla/i)}u_0 P_Ke^{it\mathfrak{m}(\nabla/i)}v_0\big\|^{2}_{L^2_{t,x}(I\times \mathbb{R}^{d-n}\times \mathbb{T}^n)} \lesssim \sum_{K_1} \big\|P_{K_1}e^{it\mathfrak{m}(\nabla/i)}u_0 P_{K}e^{it\mathfrak{m}(\nabla/i)}v_0\big\|^{2}_{L^2_{t,x}(I\times \mathbb{R}^{d-n}\times \mathbb{T}^n)}
\end{equation}
In viewing of H\"older’s inequality we are left with estimating two $L^{4}_{t,x}$-norms.
Clearly, by Lemma \ref{Strichartzgeneral},
\begin{equation}
\big\|P_Ke^{it\phi(\nabla/i)}v_0\big\|_{L^4_{t,x}(I\times \mathbb{R}^{d-n}\times \mathbb{T}^n)}\lesssim K^s \|P_K v_0\|_{L^2}.    
\end{equation}
Then it suffices to treat the other term. The rest of the proof follows line to line from Proposition 1.3 of \cite{Schippa}  so we omit. 
\end{proof}

\section{Morawetz estimates on waveguides}\label{sec Morawetz}

In this section, we establish a Morawetz estimate for solutions to \eqref{gNLS} on $\R^d \times \T$. This step is crucial to obtain the decay property for solutions of \eqref{gNLS}. 

We first define the following Morawetz action on the waveguide $\R^d \times \T$:
\begin{align}
M_{\phi} [u(t)] := 2 \im \int_{\R^d \times \T} \overline{u} (t,x ,y) \nabla_x \phi (x) \cdot \nabla_x  u(t,x,y) \, dxdy .
\end{align}
Note that, employing the similar idea in \cite{PV},  the weight function $\nabla_x \phi (x)$ that we chose here depends on only $x$, and not on $y$.

Then we present the main result in this section.  For a ready-to-use Morawetz estimate, see Corollary \ref{cor Morawetz}. 
\begin{lemma}\label{lem Morawetz}
If $u$ solves \eqref{gNLS}, then the Morawetz action satisfies the identity
\begin{align}
\frac{d}{dt}  M_{\phi} [u(t)]  = \int_{0}^{\infty} m^s \int_{\R^d \times \T}  \parenthese{ 4 \overline{ \partial_{x_k} u_m } (\partial_{x_k x_l} \phi) \partial_{x_l} u_m - \Delta_x^2 \phi \abs{u_m}^2 } \, dx d y dm - \frac{2p \mu}{p+2} \int_{\R^d \times \T} \Delta_x \phi \abs{u}^{p+1} \, dx d y .
\end{align}
\end{lemma}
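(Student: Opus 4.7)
The plan is to differentiate $M_\phi[u(t)]$ in time, substitute the equation in the form $i\partial_t u = [(-\Delta_x)^\sigma+(-\partial_y^2)^\sigma]u - \mu|u|^p u$, and split the resulting derivative into three independent contributions: the nonlinear piece, the torus fractional piece, and the Euclidean fractional piece. Schematically,
\[
\frac{d}{dt}M_\phi[u(t)] = \operatorname{Re}\langle u, [(-\Delta_x)^\sigma, A_\phi]u\rangle + \operatorname{Re}\langle u, [(-\partial_y^2)^\sigma, A_\phi]u\rangle + (\text{nonlinear}),
\]
where $A_\phi$ denotes the (skew-adjoint) Morawetz operator associated to $\nabla_x\phi$. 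I would handle the three contributions in increasing order of difficulty.

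The nonlinear piece is purely algebraic: since $|u|^p$ is real, one has $|u|^p \bar u\,\nabla_x u - \bar u\,\nabla_x(|u|^p u) = -|u|^2\nabla_x(|u|^p) = -\tfrac{p}{p+2}\nabla_x(|u|^{p+2})$, and a single integration by parts in $x$ turns the gradient onto $\nabla_x\phi$, producing the last term of the identity (I expect the final power to be $|u|^{p+2}$, in accordance with the standard convention for the Lin--Strauss Morawetz identity). The torus piece vanishes exactly: because $\nabla_x\phi$ is independent of $y$, the operator $(-\partial_y^2)^\sigma$ commutes with the multiplier $\nabla_x\phi$ and with $\nabla_x$, and its self-adjointness on $L^2(\mathbb{T})$ forces an immediate cancellation of the two terms produced by the time derivative.

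The Euclidean fractional contribution is the heart of the argument. I would use the Balakrishnan subordination formula for $\sigma\in(0,1)$,
\[
(-\Delta_x)^\sigma = c_\sigma \int_0^\infty m^{\sigma-1}(-\Delta_x)(m-\Delta_x)^{-1}\,dm,
\]
and exploit the commutator identity $[(-\Delta_x)(m-\Delta_x)^{-1}, A_\phi] = m\,(m-\Delta_x)^{-1}[A_\phi, \Delta_x](m-\Delta_x)^{-1}$, which follows from $(-\Delta_x)(m-\Delta_x)^{-1} = I - m(m-\Delta_x)^{-1}$ and the resolvent identity $[(m-\Delta_x)^{-1}, A_\phi] = (m-\Delta_x)^{-1}[\Delta_x, A_\phi](m-\Delta_x)^{-1}$. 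Interchanging the $dm$ integral with the spatial integration and setting $u_m := (m-\Delta_x)^{-1}u$ (with $y$ treated as a parameter) then gives
\[
\operatorname{Re}\langle u, [(-\Delta_x)^\sigma, A_\phi]u\rangle = c_\sigma \int_0^\infty m^{s}\operatorname{Re}\langle u_m, [\Delta_x, A_\phi]u_m\rangle_{L^2(\mathbb{R}^d\times\mathbb{T})}\,dm,
\]
with $s=\sigma$. For each fixed $m$, the inner quantity is exactly the classical local Morawetz commutator for $-\Delta_x$ evaluated on $u_m$, and two integration by parts in $x$ produce the familiar integrand $4\,\overline{\partial_{x_k}u_m}(\partial_{x_kx_l}\phi)\partial_{x_l}u_m - \Delta_x^2\phi\,|u_m|^2$, which yields the first term of the claimed identity.

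The main obstacle is not the algebra but the justification of the interchange of the $m$-integral with the time derivative and the spatial integration, together with integrability of the subordination integrand at $m\to 0^+$ and $m\to\infty$. For $u\in H^\sigma(\mathbb{R}^d\times\mathbb{T})$, resolvent bounds of the form $\|\nabla_x u_m\|_{L^2_{x,y}}\lesssim \min(m^{\sigma/2-1}, m^{-1/2})\|u\|_{H^\sigma_{x,y}}$ provide the needed decay at both endpoints, and a standard approximation argument (regularizing in $m$ and working first with Schwartz data before passing to the limit) makes every step rigorous.
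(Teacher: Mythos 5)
Your proposal is correct and follows essentially the same route as the paper's proof: the Balakrishnan subordination formula, the resolvent commutator identity $[\tfrac{A}{A+m},B]=m\tfrac{1}{A+m}[A,B]\tfrac{1}{A+m}$ with $u_m=(m-\Delta_x)^{-1}u$, the vanishing of the $(-\partial_y^2)^\sigma$ commutator since $\phi$ depends only on $x$, and the standard algebraic treatment of the nonlinear term. You also correctly identify that the exponent in the nonlinear term should be $|u|^{p+2}$ (the $|u|^{p+1}$ in the lemma statement is a typo, as the paper's own proof confirms) and that $s=\sigma$ in the $m^s$ weight.
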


\begin{proof}[Proof of Lemma \ref{lem Morawetz}]
Following the strategy in \cite{BHL}, we define
\begin{align}
\Gamma_{\phi} : =   i (\nabla_x \cdot \nabla_x \phi + \nabla_x \phi \cdot \nabla_x) , 
\end{align}
that is
\begin{align}
\Gamma_{\phi} f : = i [\nabla_x \cdot ((\nabla_x \phi) f) + \nabla_x \phi \cdot \nabla_x f] .
\end{align}

Under this notation, we claim that 
\begin{align}\label{eq claim}
\inner{u(t) , \Gamma_{\phi} (t)} & =  - M_{\phi} [u(t)] . 
\end{align}
Note that $\inner{f,g} = \re \int_{\R^d \times \T} \overline{f} g \, dx dy$.

In fact,
\begin{align}
\inner{u(t) , \Gamma_{\phi} (t)} & = \inner{u(t) ,  i [\nabla_x \cdot ((\nabla_x \phi) u) + \nabla_x \phi \cdot \nabla_x u] }  = \inner{u , i \nabla_x \cdot ( (\nabla_x \phi) u)} + \inner{u, i \nabla_x \phi \cdot \nabla_x u} .
\end{align}
We then compute the two inner products separately: 
\begin{align}
\inner{u ,  i \nabla_x \phi \cdot \nabla_x u} &  = \re \int_{\R^d \times \T} \overline{u} ( i \nabla_x \phi \cdot \nabla_x u) \, dx d y  =  - \im \int_{\R^d \times \T} \overline{u} \nabla_x \phi \cdot \nabla_x u \, dx dy  =  - \frac{1}{2} M_{\phi} ;
\end{align}
and
\begin{align}
\inner{u ,  i \nabla_x \cdot ( (\nabla_x \phi) u)} & = \re \int_{\R^d \times \T} \overline{u} i \nabla_x \cdot ((\nabla_x \phi) u) \, dx dy  =  - \im \int_{\R^d \times \T}  \overline{u} \partial_{x_l} (\partial_{x_l} \phi u) \, dx dy \\
& =  - \im  \int_{\R^d \times \T} \overline{u} \partial_{x_l x_l} \phi u \, dx dy   - \im \int_{\R^d \times \T}  \overline{u} \partial_{x_l} \phi \partial_{x_l} u \, dx dy \\
& =  - \im \int_{\R^d \times \T} \overline{u} \nabla_x \phi \cdot \nabla_x u \, dx dy  =  - \frac{1}{2} M_{\phi} .
\end{align}

Therefore, by combining these two terms, we conclude the claim \eqref{eq claim}
\begin{align}
\inner{u(t) , \Gamma_{\phi} (t)} & =  -  M_{\phi} [u(t)] .
\end{align}

Next, we compute the derivative of $M_{\phi} [u(t)] $ with respect to time $t$. Using \eqref{gNLS}
\begin{align}
\partial_t u  = i\parenthese{(-\Delta_x)^\sigma+(-\partial^2_y)^\sigma } u -i\mu|u|^pu
\end{align}

and  Plancherel theorem, we write
\begin{align}
\frac{d}{dt} M_{\phi} [u(t)] & = \inner{\frac{d}{dt} u(t), \Gamma_{\phi} u(t)} + \inner{u(t),  \frac{d}{dt} \Gamma_{\phi} u(t)} \\
& = \inner{i  \parenthese{(-\Delta_x)^\sigma+(-\partial^2_y)^\sigma }  u - i\mu  \abs{u}^p u , \Gamma_{\phi} u(t)} + \inner{u(t) , \Gamma_{\phi} \frac{d}{dt}u(t) } \\
& = \inner{i \parenthese{(-\Delta_x)^\sigma+(-\partial^2_y)^\sigma } u, \Gamma_{\phi} u(t)} - \inner{ i \mu \abs{u}^p u , \Gamma_{\phi} u(t)} \\
& \quad + \inner{u(t) , i \Gamma_{\phi} ( \parenthese{(-\Delta_x)^\sigma+(-\partial^2_y)^\sigma } u -\mu \abs{u}^p u )} \\
& = - \inner{u(t) , \parenthese{(-\Delta_x)^\sigma+(-\partial^2_y)^\sigma } i \Gamma_{\phi} u(t)} + \inner{u(t) , i \Gamma_{\phi} \parenthese{(-\Delta_x)^\sigma +(-\partial^2_y)^\sigma } u(t)}  \\
& \quad + \inner{u(t) , \mu \abs{u}^p i \Gamma_{\phi} u(t)} - \inner{u(t), i \Gamma_{\phi} (\mu \abs{u}^p u)} \\
& = -\inner{u(t), [(-\Delta_x)^\sigma+(-\partial^2_y)^\sigma  , i \Gamma_{\phi}] u(t)} + \inner{u(t) , [\mu\abs{u}^p, i \Gamma_{\phi}] u(t)}  ,
\end{align}
where we used the commutator  notation $[A,B] =AB - BA$. 

Noticing that 
\begin{align}
[(-\partial^2_y)^\sigma  , i \Gamma_{\phi}]  =0 ,
\end{align}
we then have
\begin{align}
\frac{d}{dt} M_{\phi} [u(t)] & =  \underbrace{-  \inner{u(t), [(-\Delta_x)^\sigma , i \Gamma_{\phi}] u(t)}}_{I} + \underbrace{ \inner{u(t) , [\mu \abs{u}^p, i \Gamma_{\phi}] u(t)}}_{II}  = :  I + II .
\end{align}


In the rest of the proof, we will work on the linear term $I$ and the nonlinear term $II$ separately.

First, we consider the linear term $I$. In order to deal with the $[(-\Delta_x)^\sigma , i \Gamma_{\phi}]$ term inside $I$, we will employ the following Balakrishinan’s representation formula for $(-\Delta_x)^{\sigma}$ introduced in \cite{Balakrishnan} for $\sigma \in (0,1)$, 
\begin{align}\label{eq Bala}
(-\Delta_x)^{\sigma} = \frac{\sin (\pi \sigma)}{\pi} \int_0^{\infty} m^{\sigma -1}  \frac{-\Delta_x}{-\Delta_x + m} \, dm .
\end{align}

In general, for $A \geq 0$, $m >0$, the following commutator has the form of 
\begin{align}
[\frac{A}{A + m} , B] =  m\frac{1}{A +m} [A,B] \frac{1}{A+m} .
\end{align}
In particular, if taking $A = -\Delta_x$ and combining with \eqref{eq Bala}, we write
\begin{align}\label{eq 6}
[(-\Delta_x)^{\sigma} , B] = \frac{\sin (\pi \sigma)}{\pi} \int_0^{\infty} m^{\sigma} \frac{1}{-\Delta_x + m} [-\Delta_x , B] \frac{1}{-\Delta_x + m} \, dm .
\end{align}

Then taking $B = i\Gamma_{\phi}$ in \eqref{eq 6}, we have 
\begin{align}\label{eq 7}
[(-\Delta_x)^{\sigma} , i\Gamma_{\phi}] = \frac{\sin (\pi \sigma)}{\pi} \int_0^{\infty} m^{\sigma} \frac{1}{-\Delta_x + m} [-\Delta_x , i\Gamma_{\phi}] \frac{1}{-\Delta_x + m} \, dm .
\end{align}

Now we claim that 
\begin{align}\label{eq 9}
[-\Delta_{x} , i \Gamma_{\phi}] = 4 \partial_{x_k } (\partial_{x_k x_l} \phi) \partial_{x_l} + \Delta_x^2 \phi , 
\end{align}
where to emphasize that $\Delta_x$ takes derivative only in the $\R^d$ direction, we put $x$ in its subscript. Similarly, in the following calculations, $\partial_{x_l}$ and $\partial_{x_k}$ are differential operators in $\R^d$ directions, while $\partial_y$ is the $\T$ direction derivative.

In fact, recalling
\begin{align}\label{eq 4}
i \Gamma_{\phi} f : =  -[\nabla_x \cdot ((\nabla_x \phi) f) + \nabla_x \phi \cdot \nabla_x f] ,
\end{align}
we write
\begin{align}
[-\Delta_{x} , i \Gamma_{\phi}] f & = -\Delta_{x} (i\Gamma_{\phi}) f + i \Gamma_{\phi} (\Delta_{x} f) \\
& =  \Delta_{x} \nabla_x \cdot ( (\nabla_x \phi)f) + \Delta_{x} (\nabla_x \phi \cdot \nabla_x f) - \nabla_x ((\nabla_x \phi) \Delta_{x} f ) - \nabla_x f \cdot \nabla_x ( \Delta_{x} f) \\
& =  \partial_{x_k x_k} \partial_{x_l} (\partial_{x_l} \phi  f) + \partial_{x_k x_k} (\partial_{x_l} \phi \partial_{x_l} f) - \partial_{x_l} (\partial_{x_l} \phi \partial_{x_k x_k} f) - \partial_{x_l} \partial_{x_l x_k x_k} f  . \label{eq 1} 
\end{align}

Then using the product rule, we  continue from \eqref{eq 1} 
\begin{align}
& \quad [-\Delta_{x} , i \Gamma_{\phi}] f  \\
& =  \partial_{x_k x_k} (\partial_{x_l x_l} \phi f) + 2 \partial_{x_k x_k} (\partial_{x_l} \phi \partial_{x_l} f) - \partial_{x_l x_l} \phi \partial_{x_k x_k} f - \partial_{x_l} \phi \partial_{x_k x_k x_l} f - \partial_{x_l} f \partial_{x_k x_k x_l} f\\
& = \partial_{x_k x_k x_l x_l} \phi f + 2 \partial_{x_k x_l x_l} \phi \partial_{x_k} f + \partial_{x_l x_l} \phi \partial_{x_k x_k} f + 2 \partial_{x_k x_k x_l} \phi \partial_{x_l} f + 4 \partial_{x_k x_l} \phi \partial_{x_k x_l} f + 2 \partial_{x_l} \phi \partial_{x_k x_k x_l} f \\
& \quad - \partial_{x_l x_l} \phi \partial_{x_k x_k} f - \partial_{x_l} \partial_{x_k x_k x_l} f - \partial_{x_l} \phi \partial_{x_k x_k x_l}f \\
& =  \Delta_x^2 \phi f + 4 \partial_{x_k x_l x_l} \phi \partial_{x_k} f + 4 \partial_{x_k x_l} \phi \partial_{x_k x_l} f\\
& =  4 \partial_{x_k } (\partial_{x_k x_l} \phi) \partial_{x_l} f + \Delta_x^2 \phi f . \label{eq 8}
\end{align}
This proves the claim \eqref{eq 9}.

At this point, we are in  a good position to compute the term $I$. 
First, combining \eqref{eq 9} and \eqref{eq 7}, we write
\begin{align}
[(-\Delta_x)^{\sigma}, i \Gamma_{\phi}] & = \frac{\sin (\pi \sigma)}{\pi} \int_0^{\infty} m^{\sigma} \frac{1}{-\Delta_x + m} [-\Delta_x , i \Gamma_{\phi}] \frac{1}{-\Delta_x +m}\, dm \\
& = \frac{\sin (\pi \sigma)}{\pi} \int_0^{\infty} m^{\sigma} \frac{1}{-\Delta_x + m} [4 \partial_{x_k } (\partial_{x_k x_l} \phi) \partial_{x_l}  + \Delta_x^2 \phi ] \frac{1}{-\Delta_x +m}\, dm  . \label{eq 5}
\end{align}
Therefore
\begin{align}
I = - \inner{u(t) , [(-\Delta_x)^{\sigma}, i \Gamma_{\phi}]u} & =  \int_{\R^d \times \T} \overline{u} \frac{\sin (\pi \sigma)}{\pi} \int_0^{\infty} m^{\sigma} \frac{1}{-\Delta_x + m} [-4 \partial_{x_k } (\partial_{x_k x_l} \phi) \partial_{x_l}  - \Delta_x^2 \phi ] \frac{1}{-\Delta_x +m} u\, dm dxdy .
\end{align}

For $m > 0$, we define 
\begin{align}
u_m (t) := c_{\sigma} \frac{1}{-\Delta_{x } +m} u(t) = c_{\sigma} \mathcal{F}^{-1} \parenthese{ \frac{\widehat{u}(t, \xi)}{\abs{\xi}^2 +m} } ,
\end{align}
where
\begin{align}
c_{\sigma} : = \sqrt{\frac{\sin (\pi \sigma)}{\pi}} .
\end{align}

Under such change of variables and with  Fubini's theorem, Plancherel theorem, and integration by parts, we obtain
\begin{align}
I & = \int_0^{\infty} \int_{\R^d \times \T} \overline{u} c_{\sigma} m^{\sigma} \frac{1}{-\Delta_x +m} [-4 \partial_{x_k } (\partial_{x_k x_l} \phi) \partial_{x_l}  - \Delta_x^2 \phi ]  u_m \, dx dy dm \\
& = \int_0^{\infty} m^{\sigma}  \int_{\R^d \times \T} \frac{c_{\sigma}}{-\Delta_x +m} \overline{u}   [-4 \partial_{x_k } (\partial_{x_k x_l} \phi) \partial_{x_l}  - \Delta_x^2 \phi ]  u_m \, dx dy dm \\
& =  \int_0^{\infty} m^{\sigma}  \int_{\R^d \times \T}  \overline{u_m}   [-4 \partial_{x_k } (\partial_{x_k x_l} \phi) \partial_{x_l}  - \Delta_x^2 \phi ]  u_m \, dx dy dm \\
& = \int_0^{\infty} m^{\sigma}  \int_{\R^d \times \T} 4 \overline{ \partial_{x_k} u_m } (\partial_{x_k x_l} \phi) \partial_{x_l} u_m - \Delta_x^2 \phi \abs{u_m}^2 \, dxdydm .
\end{align}

Then we consider  the nonlinear term $II$. Noticing that 
\begin{align}
\nabla (\abs{u}^{p+2}) = \frac{p+2}{p} \nabla (\abs{u}^p) \abs{u}^2 ,
\end{align}
and using \eqref{eq 4} again, we obtain 
\begin{align}
II = \inner{u(t), [\mu \abs{u}^p , i \Gamma_{\phi}] u(t)} & = - \mu \inner{u(t) , [\abs{u}^p , \nabla_x \cdot \nabla_x \phi + \nabla_x \phi \cdot \nabla_x] u} \\
& = - \mu \inner{u(t) , \abs{u}^p \nabla_x \cdot ((\nabla_x \phi)u)} - \mu \inner{u(t) , \abs{u}^p \nabla_x \phi \cdot \nabla_x u} \\
& = - \frac{2p \mu}{p+2} \int_{\R^d \times \T} (\Delta_x \phi) \abs{u}^{p+2} \, dx dy .
\end{align}

Therefore, together with the computation on terms $I$ and $II$, we get
\begin{align}
\frac{d}{dt}  M_{\phi} [u(t)] =  \int_{0}^{\infty} m^s \int_{\R^d \times \T} \parenthese{ 4 \overline{ \partial_{x_k} u_m } (\partial_{x_k x_l} \phi) \partial_{x_l} u_m - \Delta_x^2 \phi \abs{u_m}^2 } \, dx d y dm - \frac{2p\mu}{p+2} \int_{\R^d \times \T} \Delta_x \phi \abs{u}^{p+2} \, dx d y ,
\end{align}
which implies Lemma \ref{lem Morawetz}. 
\end{proof}

\begin{corollary}\label{cor Morawetz}
Assume  $u$ to be a smooth solution to the initial value problem \eqref{gNLS} with $d \geq 3$, then we have the following Morawetz inequality
\begin{align}
\int_{\R} \int_{\R^d \times \T} \frac{\abs{u(t,x,y)}^{p+2} }{\abs{x}} \, dx d y dt \lesssim  \sup_{t \in \R} \norm{u(t)}_{\dot{H}^{\frac{1}{2}}}^2 \lesssim  \sup_{t \in \R} \norm{u(t)}_{H^{\sigma}}^2 .
\end{align}
\end{corollary}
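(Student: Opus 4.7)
The plan is to apply Lemma \ref{lem Morawetz} with the canonical Morawetz weight $\phi(x) = \abs{x}$, which is the standard choice on $\R^d$ for $d\geq 3$. The key algebraic facts to verify first are that
\begin{align*}
\partial_{x_k x_l}\phi = \tfrac{1}{\abs{x}}\bigl(\delta_{kl} - \tfrac{x_k x_l}{\abs{x}^2}\bigr) \succeq 0, \qquad \Delta_x\phi = \tfrac{d-1}{\abs{x}}\geq 0, \qquad -\Delta_x^2\phi \geq 0,
\end{align*}
where $-\Delta_x^2\phi = (d-1)(d-3)\abs{x}^{-3}$ pointwise for $d\geq 4$, and $-\Delta_x^2\phi = 8\pi\,\delta_0$ distributionally when $d=3$. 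Because the Hessian of $\phi$ is pointwise nonnegative as a matrix and $-\Delta_x^2\phi\geq 0$, the linear integrand $4\overline{\partial_{x_k}u_m}(\partial_{x_k x_l}\phi)\partial_{x_l}u_m - \Delta_x^2\phi\,\abs{u_m}^2$ is pointwise nonnegative for every $m\geq 0$, so the entire $m$-integral on the right-hand side of Lemma \ref{lem Morawetz} is nonnegative.

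In the defocusing regime $\mu=-1$ (the setting of interest for the scattering statement in the next section), the nonlinear term becomes $\tfrac{2p(d-1)}{p+2}\int \abs{u}^{p+2}/\abs{x}\,dxdy$, which is also nonnegative. Combining the two signs gives
\begin{align*}
\frac{d}{dt}M_\phi[u(t)] \gtrsim \int_{\R^d\times \T}\frac{\abs{u(t,x,y)}^{p+2}}{\abs{x}}\,dxdy.
\end{align*}
Integrating in $t\in\R$ and invoking the fundamental theorem of calculus then yields
\begin{align*}
\int_\R\int_{\R^d\times \T}\frac{\abs{u(t,x,y)}^{p+2}}{\abs{x}}\,dxdy\,dt \lesssim \sup_{t\in\R}\abs{M_\phi[u(t)]}.
\end{align*}

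It remains to show the uniform bound $\abs{M_\phi[u(t)]}\lesssim \norm{u(t)}_{\dot H^{1/2}}^2$. Since $\nabla_x\phi = x/\abs{x}$ is bounded, $M_\phi$ is the pairing of $u$ with a formally self-adjoint first-order differential operator acting only in the $x$ variable. Working slicewise in $y$, distributing half a derivative onto each factor via Plancherel in $\R^d$, and using $L^2$-boundedness of the Riesz transforms together with a paraproduct/commutator estimate to absorb the symbol $x/\abs{x}$, one gets
\begin{align*}
\abs{M_\phi[u(t)]} \lesssim \bigl\|\abs{\nabla_x}^{1/2}u(t)\bigr\|_{L^2_{x,y}}^2 \leq \norm{u(t)}_{\dot H^{1/2}(\R^d\times \T)}^2,
\end{align*}
and the second inequality of the corollary follows from the trivial embedding $H^\sigma \hookrightarrow \dot H^{1/2}$, valid since $\sigma > 1/2$ by hypothesis.

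The main technical obstacle is this final $\dot H^{1/2}$ control of $M_\phi$: the weight $x/\abs{x}$ is merely $L^\infty$ and singular at the origin, so one cannot simply apply the fractional Leibniz rule in one shot and needs a careful paraproduct or commutator analysis in $x$ only (the $y$ variable plays no role after freezing). All other ingredients—the positive semidefiniteness of the Hessian of $\abs{x}$, the positivity of $-\Delta^2\abs{x}$ in $d\geq 3$, the sign of the nonlinear term for $\mu=-1$, and the time integration—are routine.
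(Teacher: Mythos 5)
Your proposal is correct and follows essentially the same route as the paper: the weight $\phi(x)=\abs{x}$, positivity of its Hessian and of $-\Delta_x^2\abs{x}$ for $d\geq 3$ (the paper verifies the Hessian positivity via the radial/angular decomposition of $\nabla u_m$, which is equivalent to your projection statement), the defocusing sign of the nonlinear term, time integration, and the bound $\abs{M_{\abs{x}}[u(t)]}\lesssim \norm{u(t)}_{\dot H^{1/2}}^2$. The only difference is that the paper simply cites \cite{CKSTT} for this last bound rather than sketching the half-derivative/commutator argument you outline.
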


\begin{proof}[Proof of Corollary \ref{cor Morawetz}]
We take $\phi(x) = \abs{x}$ (independent on $y$) in Lemma \ref{lem Morawetz}. Hence
\begin{align}
\nabla_x \phi & = \frac{x}{\abs{x}} ,\\
\Delta_x \phi & = \frac{d-1}{\abs{x}} ,\\
\partial_{x_k x_l} \phi & = \frac{\delta_{x_k x_l}}{\abs{x}} - \frac{x_k x_l}{\abs{x}^3}\\
\Delta_x^2 \phi & = 
\begin{cases}
- \pi \delta(x) , & d=3,\\
- (d-1)(d-3)\abs{x}^{-3}, & d \geq 4 .
\end{cases}
\end{align}
Under such choice of $\phi $, we claim that
\begin{align}\label{eq 2}
\int_{0}^{\infty} m^s \int_{\R^d \times \T}  \parenthese{4 \overline{ \partial_{x_k} u_m } (\partial_{x_k x_l} \phi) \partial_{x_l} u_m - \Delta_x^2 \phi \abs{u_m}^2 } \, dx d y dm  \geq 0 .
\end{align}
Assuming \eqref{eq 2}, we obtain for the defocusing equation ($\mu=-1$ in \eqref{gNLS})
\begin{align}
\frac{d}{dt}   M_{\abs{x}} [u(t)] &  \geq \frac{2p}{p+2} \int_{\R^d \times \T} \Delta_x (\abs{x}) \abs{u(t,x,y)}^{p+2} \, dx d y \\
& =  \frac{2p}{p+2} \int_{\R^d \times \T} \frac{d-1}{\abs{x}} \abs{u(t,x,y)}^{p+2} \, dx d y , \label{eq 10}
\end{align}
which gives Corollary \ref{cor Morawetz} by combining the following upper bound of $M_{\abs{x}}$ in \cite{CKSTT}
\begin{align}
\abs{M_{\abs{x}} [u(t)]} \lesssim  \sup_{t } \norm{u(t)}_{\dot{H}^{\frac{1}{2}}}^2 \lesssim  \sup_{t} \norm{u(t)}_{H^{\sigma}}^2 ,
\end{align}
and integrating in $t$ using the fundamental theorem of calculus.

To see \eqref{eq 2}, we write (when $d \geq 4$)
\begin{align}
& \quad \int_{\R^d} \parenthese{4 \overline{ \partial_{x_k} u_m } (\partial_{x_k x_l} \phi) \partial_{x_l} u_m - \Delta_x^2 \phi \abs{u_m}^2 } \, dx \\
& = \int_{\R^d} \parenthese{ 4 \overline{ \partial_{x_k} u_m } (\frac{\delta_{x_k x_l}}{\abs{x}} - \frac{x_k x_l}{\abs{x}^3}) \partial_{x_l} u_m + \frac{(d-1)(d-3)}{\abs{x}^3} \abs{u_m}^2 }\, dx  .\label{eq 3}
\end{align} 
The $d =3$ case can be handled similarly, hence omitted.

Using the notation $\nabla_{\vec{e}} u = (\vec{e} \cdot u) \frac{\vec{e}}{\abs{\vec{e}}^2}$ and $\nabla_{\vec{e}}^{\perp} u = \nabla u - \nabla_{\vec{e}} u$ with $\vec{e} = \vec{x}$, we can decompose $\nabla u$ orthogonally. Then we have 
\begin{align}
 \overline{ \partial_{x_k} u_m } x_k x_l \partial_{x_l} u_m & \leq \frac{1}{2} \abs{\partial_{x_k} u_m x_k}^2 + \frac{1}{2} \abs{\partial_{x_l} u_m x_l}^2 \\
& \leq \frac{1}{2} \abs{\nabla_{\vec{e}} u_m}^2 \abs{x}^2 + \frac{1}{2} \abs{\nabla_{\vec{e}} u_m}^2 \abs{x}^2 \\
& = \abs{\nabla_x u_m}^2 \abs{x}^2 .
\end{align}
Then continuing from \eqref{eq 3}, we obtain
\begin{align}
\eqref{eq 3} & \geq \int_{\R^d} \parenthese{4 \frac{\abs{\nabla_x u_m}^2}{\abs{x}} - 4 \frac{\abs{\nabla_{\vec{e}} u_m}^2 \abs{x}^2}{\abs{x}^3} + \frac{(d-1) (d-3)}{\abs{x}^3} \abs{u_m}} \, dx \\
& = \int_{\R^d} \parenthese{4 \frac{\abs{\nabla_x u_m}^2 - \abs{\nabla_{\vec{e}} u_m}^2 }{\abs{x}} + \frac{(d-1)(d-3)}{\abs{x}^3} \abs{u_m}^2 } \, dx \\
& = \int_{\R^d} \parenthese{4 \frac{ \abs{\nabla_{\vec{e}}^{\perp} u_m}^2 }{\abs{x}} + \frac{(d-1)(d-3)}{\abs{x}^3} \abs{u_m}^2 } \, dx  \geq 0 , 
\end{align}
then  \eqref{eq 2} follows by integrating \eqref{eq 10} in both $y $ and $m$. This completes the proof of Corollary \ref{cor Morawetz}.

\end{proof}

\section{Proof of the scattering result}\label{sec Scattering}
In this section, we give the proof for the scattering result in Theorem \ref{mainthm}. There are four steps and we will discuss them step by step. The strategy has similar spirit with \cite{TV2, YYZ2}.
\subsection{Step 1: the Morawetz bound}
Recall the Morawetz estimate (Corollary \ref{cor Morawetz}) established in Section \ref{sec Morawetz}, we have 
\begin{equation}\label{Morawetz}
   \int_{\mathbb{R}} \int_{\mathbb{R}^d \times \mathbb{T}} \frac{|u(t,x,y)|^{2+p}}{|x|} \, dt dxdy \lesssim_{\textmd{data}} 1.
\end{equation}
\subsection{Step 2: Proof of the decay property}
Based on the Morawetz bound above, we aim to show the decay property of \eqref{gNLS}, i.e.
\begin{equation}
 \lim_{t \rightarrow \infty} \|u(t,x,y)\|_{L^q_{x,y} (\R^d \times \T)}=0,   
\end{equation}
where $2<q \leq 2+r$ (for any $r<\frac{2+4\sigma+dp+p+2p\sigma}{2d}$). This decay property is essential for us to obtain the scattering result. 
\begin{remark}
One may also consider the (stronger) pointwise type decay which describes the decay rate of nonlinear solutions quantitatively. See \cite{fan2021decay,yu2022decay} for recent results and the references therein.
\end{remark}
In viewing of interpolation with the mass conservation law, it suffices to show the endpoint case, that is,
\begin{equation}\label{decay}
 \lim_{t \rightarrow \infty} \|u(t,x,y)\|_{L^{2+r}_{x,y} (\R^d \times \T)}=0.   
\end{equation}
We will prove it by contradiction. Before starting with the proof, we recall a radial Sobolev embedding as follows,
\begin{lemma}[Radial Sobolev Embeddings in $\R^d$ in \cite{TVZ}]\label{prop Radial Sobolev}
Let $d \geq 1$, $1 \leq q \leq \infty$, $0 < s < d$ and $\beta \in \R$ obey the conditions
\begin{align*}
\beta > -\frac{d}{q}, \quad 0 \leq \frac{1}{p} -\frac{1}{q} \leq s
\end{align*}
and the scaling condition
\begin{align*}
\beta + s = \frac{d}{p} - \frac{d}{q}
\end{align*}
with at most one of the equalities
\begin{align*}
p=1, \quad p=\infty, \quad q=1, \quad q=\infty, \quad \frac{1}{p} - \frac{1}{q} =s
\end{align*}
holding. Then for any spherically symmetric function $f \in \dot{W}^{s,p} (\R^d)$, we have 
\begin{align*}
\norm{\abs{x}^{\beta} f}_{L^q (\R^d)} \lesssim \norm{\abs{\nabla}^s f}_{L^p (\R^d)}.
\end{align*}
\end{lemma}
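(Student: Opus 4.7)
The plan is to exploit the scale-invariance of the inequality (encoded in the hypothesis $\beta + s = d/p - d/q$) together with the extra pointwise decay enjoyed by radial Sobolev functions, via a dyadic decomposition in both frequency and in the radial variable.

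First, by a standard density argument, one reduces to the case of $f$ a radial Schwartz function and performs a Littlewood--Paley decomposition $f = \sum_{N \in 2^{\mathbb{Z}}} P_N f$. Each piece $P_N f$ remains radial, so the key input is a radial Bernstein--type (Strauss) pointwise bound of the form
\begin{align*}
\abs{P_N f(x)} \lesssim (N\abs{x})^{-(d-1)/p}\, N^{d/p}\, \norm{P_N f}_{L^p},
\end{align*}
valid for $N\abs{x} \gtrsim 1$, together with the standard Bernstein inequality $\norm{P_N f}_{L^\infty} \lesssim N^{d/p} \norm{P_N f}_{L^p}$ in the regime $N\abs{x} \lesssim 1$. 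Using the relation $\norm{P_N f}_{L^p} \lesssim N^{-s} \norm{\abs{\nabla}^s P_N f}_{L^p}$ to bring in the fractional derivative, one obtains, on each spatial dyadic annulus $A_j = \{\abs{x} \sim 2^j\}$, an estimate of $\norm{\abs{x}^\beta P_N f}_{L^q(A_j)}$ whose prefactors in $(N,2^j)$ are organized by the scaling identity into a geometric series.

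Next I would sum in $j$ (the annuli) and then in $N$ (via the Littlewood--Paley square function in the range $1 < p, q < \infty$) to complete the bound. The strict inequality $\beta > -d/q$ is precisely what guarantees integrability of the weight near the origin, while $1/p - 1/q \leq s$ (with at most one endpoint saturated) is what ensures convergence of the geometric series in the high-frequency regime. The Minkowski inequality (or Hölder, depending on the relative size of $p$ and $q$) handles the assembly of the annular pieces into the global norm.

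The main obstacle will be the careful bookkeeping of the interaction between the radial improvement (which effectively reduces the ambient dimension from $d$ to $1$ in the angular directions) and the power weight $\abs{x}^\beta$, particularly at the boundary of the allowed parameter ranges where the various series become only barely convergent. An arguably cleaner alternative is to write $f = I_s(\abs{\nabla}^s f)$ via the Riesz potential and reduce the desired weighted estimate on $I_s g$, for radial $g \in L^p$, to a one-dimensional weighted Hardy--Littlewood inequality on $(0, \infty)$ in the variable $\abs{x}$ after passing to polar coordinates; the scaling condition $\beta + s = d/p - d/q$ then pins down the exponents, and the non-endpoint assumptions are exactly what is needed to invoke the classical 1D weighted inequality.
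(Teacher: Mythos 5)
The paper does not prove this lemma at all: it is imported verbatim from the cited reference [TVZ] (Tao--Visan--Zhang) and used as a black box in Step 2 of the scattering argument, so there is no in-paper proof to compare against. Your primary route --- density reduction, Littlewood--Paley decomposition, the radial (Strauss-type) Bernstein bound $\abs{P_N f(x)} \lesssim (N\abs{x})^{-(d-1)/p} N^{d/p} \norm{P_N f}_{L^p}$ for $N\abs{x}\gtrsim 1$ versus ordinary Bernstein for $N\abs{x}\lesssim 1$, followed by summation over spatial annuli and frequencies organized by the scaling relation --- is essentially the proof given in that reference, and the skeleton is sound. Two points deserve care if you flesh this out. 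First, your radial Bernstein exponent should be justified (it follows by interpolating the $p=1$ kernel estimate, where a cap of angular measure $\sim (N\abs{x})^{-(d-1)}$ of the sphere contributes, against the trivial $p=\infty$ bound); and to control $\norm{\abs{x}^\beta P_N f}_{L^q(A_j)}$ for finite $q$ you should interpolate this pointwise bound with the trivial $L^p(A_j)$ bound rather than putting everything in $L^\infty$ on the annulus, otherwise the low-frequency/large-radius sum is lossier than the hypotheses allow. Second, the triangle inequality in $N$ suffices except exactly when one of the permitted equalities is saturated, which is where the square-function argument (hence the restriction $1<p,q<\infty$ in that step) genuinely enters; your proposal correctly anticipates this but leaves it as bookkeeping. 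Your alternative via $f = I_s(\abs{\nabla}^s f)$ and a one-dimensional weighted inequality in the radial variable is a legitimate classical route (a radial Stein--Weiss-type reduction) and is arguably cleaner in the strictly non-endpoint range, though it handles the saturated case $\frac1p-\frac1q=s$ less directly. I see no fatal gap, only unexecuted details.
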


Let $\beta$ satisfies $(2+p)\beta+1=2+r$.

Via Lemma \ref{prop Radial Sobolev},  H\"older inequality and Sobolev embedding, we have
\begin{align}
\|u(t,x,y)\|_{L^{2+r}_{x,y} (\R^d \times \T)}  &= \big(\int |u(t,x,y)|^{2+r} \, dx dy \big)^{\frac{1}{2+r}}\\  &=\big(\int \frac{|u(t,x,y)|^{(2+p)\beta}}{|x|^{\beta}} \cdot |x|^{\beta} |u| \big)^{\frac{1}{2+r}}\\
&\lesssim \big( \int_{\mathbb{R}^d \times \mathbb{T}} \frac{|u(t,x,y)|^{2+p}}{|x|}  \, dxdy \big)^{\frac{\beta}{2+r}}\cdot \big(\int (|x|^{\beta} |u|)^{\frac{1}{1-\beta}} \, dxdy \big)^{\frac{1-\beta}{2+r}}\\
&= \big( \int_{\mathbb{R}^d \times \mathbb{T}} \frac{|u(t,x,y)|^{2+p}}{|x|}  \, dxdy \big)^{\frac{\beta}{2+r}}\cdot \big(\|(|x|^{\beta} |u|)\|_{L_{x,y}^{\frac{1}{1-\beta}}} \big)^{\frac{1}{2+r}} \\
&\lesssim \big( \int_{\mathbb{R}^d \times \mathbb{T}} \frac{|u(t,x,y)|^{2+p}}{|x|}  \, dxdy \big)^{\frac{\beta}{2+r}}\cdot \big( \big\| \||\nabla|^s u\|_{L^2_x} \big\|_{L_{y}^{\frac{1}{1-\beta}}} \big)^{\frac{1}{2+r}} \\
&\lesssim \big( \int_{\mathbb{R}^d \times \mathbb{T}} \frac{|u(t,x,y)|^{2+p}}{|x|}  \, dxdy \big)^{\frac{\beta}{2+r}}\cdot \big(  \||\nabla_x|^s |\nabla_{y}|^{\tau}u\|_{L^2_xL_{y}^{2}}  \big)^{\frac{1}{2+r}} \\
&\lesssim \big( \int_{\mathbb{R}^d \times \mathbb{T}} \frac{|u(t,x,y)|^{2+p}}{|x|}  \, dxdy \big)^{\frac{\beta}{2+r}}.
\end{align}
We require the indices satisfy:
\begin{align}
&s+\tau \leq \sigma \quad \textmd{(regularity requirement from the energy conservation)}, \\
&(\beta-\frac{1}{2})+=\tau \quad \textmd{(Sobolev embedding in 1D)},\\
& \beta+s=\frac{d}{2}-d(1-\beta) \quad \textmd{(radial Sobolev embedding)},\\
& (2+p)\beta+1=2+r \quad \textmd{(the relation between $\beta$ and $r$)}.
\end{align}
We need to choose $\beta$ satisfies $\beta<\frac{1}{2}+\frac{\frac{1}{2}+\sigma}{d}$. Correspondingly, $r<\frac{2+4\sigma+dp+p+2p\sigma}{2d}$. That is the exponent requirement in the decay estimate \eqref{decay}.

We are now ready to prove \eqref{decay} by contradiction argument. If \eqref{decay} does not hold, using the estimate above, we deduce the existence of a sequence $\{t_n\}_{n} \rightarrow \infty$ and $\epsilon_0>0$ such that  
\begin{equation}
  \big( \int_{\mathbb{R}^d \times \mathbb{T}} \frac{|u(t_n,x,y)|^{2+p}}{|x|} \, dxdy \big)^{\frac{\beta}{2+r}}  \geq \|u(t_n,x,y)\|_{L^{2+r}_{x,y} (\R^d \times \T)} \geq \epsilon_0>0.
\end{equation}
Without loss of generality, we consider $t_n \rightarrow +\infty$. Similar as in \cite{TV2}, we get the existence of $T>0$ such that
\begin{equation}
  \inf_{n} \inf_{t\in (t_n,t_n+T)} \big( \int_{\mathbb{R}^d \times \mathbb{T}} \frac{|u(t,x,y)|^{2+p}}{|x|}  \, dxdy \big)^{\frac{\beta}{2+r}}  \geq \frac{\epsilon_0}{2}.    
\end{equation}
Notice that since $\{t_n\}_{n} \rightarrow +\infty$ then we can assume (modulo subsequence) that the
intervals $(t_n, t_n+T)$ are disjoint. In particular we have
\begin{align}
 \sum_n T(\frac{\epsilon_0}{2})^{\frac{2+r}{\beta}} &\leq \sum_n \int_{t_n}^{t_n+T} \big( \int_{\mathbb{R}^d \times \mathbb{T}} \frac{|u(t,x,y)|^{2+p}}{|x|}  \, dxdy \big)  dt\\
 &\leq  \int \big( \int_{\mathbb{R}^d \times \mathbb{T}} \frac{|u(t,x,y)|^{2+p}}{|x|}  \, dxdy \big) dt  
\end{align}
and hence we get a contradiction since the left hand side is divergent and the right
hand side is bounded by \eqref{Morawetz}.
\subsection{Step 3: Proof of the spacetime bound}
We aim to show,
\begin{equation}\label{est1}
  u \in L_t^{q_{\theta}}L_x^{r_{\theta}}H_{y}^{\frac{1}{2}+\delta}(\mathbb{R}_t\times \R^d \times \T)  
\end{equation}
and
\begin{equation}\label{est2}
  \lim_{t_1,t_2\rightarrow \infty} \parenthese{ \|u\|_{L_t^{l'}L_x^{m'}L^2_{y}([t_1,t_2] \times \R^d \times \T)}+\||\nabla_x|^{\sigma}(u)\|_{L_t^{l}L_x^{m}L^2_{y}([t_1,t_2] \times \R^d \times \T)}+\||\partial_{y}|^{\sigma}(u)\|_{L_t^{l}L_x^{m}L^2_{y}([t_1,t_2] \times \R^d \times \T)}}< \infty. 
\end{equation}

The above spacetime bounds are sufficient to show the scattering for \eqref{gNLS}. In this step, all spacetime norms are over $\mathbb{R}_t\times \R^d \times \T$ unless indicated otherwise. For example, we define
\begin{align*}
\|f(t)\|_{L^p_{t>t_0}} : = (\int_{t_0}^{\infty}|f(t)|^{p} \, dt)^{\frac{1}{p}}
\end{align*}
for any given time-dependent function $f(t)$, and similarly we can define $\|f(t)\|_{L^p_{t<t_0}}$. We note that we will apply an $H_{y}^{\frac{1}{2}+\delta}$ valued version of the critical analysis of \cite{cazenave2003semilinear}.

\begin{proof}
Using Strichartz estimates and the H\"older inequality,
\begin{equation}
\aligned
    \|u\|_{L_{t>t_0}^{q_{\theta}}L_x^{r_{\theta}}H_{y}^{\frac{1}{2}+\delta}} &\lesssim \|u_0\|_{H^{\sigma} (\R^d \times \T)}+\||u|^pu\|_{L_{t>t_0}^{\tilde{q_{\theta}}'}L_x^{\tilde{r_{\theta}}'}H_{y}^{\frac{1}{2}+\delta}} \\
    &\lesssim \|u_0\|_{H^{\sigma}(\R^d \times \T)}+\|u\|^{1+p}_{L_{t>t_0}^{(1+p)\tilde{q_{\theta}}'}L_x^{(1+p)\tilde{r_{\theta}}'}H_{y}^{\frac{1}{2}+\delta}} \\
    &\lesssim \|u_0\|_{H^{\sigma}(\R^d \times \T)}+\|u\|^{(1+p)\theta}_{L_{t>t_0}^{q_{\theta}}L_x^{r_{\theta}}H_{y}^{\frac{1}{2}+\delta}}\|u\|^{(1+p)(1-\theta)}_{L_{t>t_0}^{\infty}L_x^{\frac{pd}{2}}H_{y}^{\frac{1}{2}+\delta}}.
\endaligned    
\end{equation}
Similar to Lemma 2.5 in \cite{TV2} (this lemma is an analysis result which does not involve the nonlinear PDE structure so we can use it directly), based on the decay property \eqref{decay}, we can further obtain
\begin{equation}\label{decay2}
 \|u\|_{L_x^{\frac{pd}{2}}H_{y}^{\frac{1}{2}+\delta}}=o(1).
\end{equation}
Using the decay property \eqref{decay2}, we see for every $\epsilon>0$ there exists $t_0=t_0(\epsilon)>0$ such that
\begin{equation}
    \|u\|_{L_{t>t_0}^{q_{\theta}}L_x^{r_{\theta}}H_{y}^{\frac{1}{2}+\delta}} \leq C\|u_0\|_{H^2(\R^d \times \T)}+\epsilon \|u\|_{L_{t>t_0}^{q_{\theta}}L_x^{r_{\theta}}H_{y}^{\frac{1}{2}+\delta}}.
\end{equation}
We can now use the continuity argument to obtain
\begin{equation}
\|u\|_{L_{t>0}^{q_{\theta}}L_x^{r_{\theta}}H_{y}^{\frac{1}{2}+\delta}}<\infty.
\end{equation}
Similarly, we obtain $\|u\|_{L_{t<0}^{q_{\theta}}L_x^{r_{\theta}}H_{y}^{\frac{1}{2}+\delta}}<\infty $.

Now we consider the second estimate. We show $\||\partial_{y}|^{\sigma}(u)\|_{L_t^{l}L_x^{m}L^2_{y}}$, the other estimates are similar.  Using Strichartz estimate and the H\"older inequality,
\begin{equation}
\aligned
   \||\partial_{y}|^{\sigma}(u)\|_{L_{t>t_0}^{l}L_x^{m}L^2_{y}} &\lesssim \|u_0\|_{H^{\sigma}(\R^d \times \T)}+\||\partial_{y}|^{\sigma}(|u|^pu)\|_{L_{t>t_0}^{l'}L_x^{m'}L^2_{y}} \\
    &\lesssim \|u_0\|_{H^{\sigma}(\R^d \times \T)}+\||\partial_{y}|^{\sigma}(u)\|_{L_{t>t_0}^{l}L_x^{m}L^2_{y}}\|u\|^p_{L_{t>t_0}^{q_{\theta}}L^{r_{\theta}}_xL_{y}^{\infty}} \\
    &\lesssim \|u_0\|_{H^{\sigma}(\R^d \times \T)}+\||\partial_{y}|^{\sigma}(u)\|_{L_{t>t_0}^{l}L_x^{m}L^2_{y}}\|u\|^p_{L_{t>t_0}^{q_{\theta}}L^{r_{\theta}}_xH_{y}^{\frac{1}{2}+\delta}}.
\endaligned    
\end{equation}
We conclude by choosing $t_0$ large enough and by recalling \eqref{est1}.

For FNLS, due to the Strichartz estimates and Sobolev embedding, we choose the indices satisfying, 
\begin{equation}
   s+\frac{1}{2}+\delta \leq \sigma, \quad \textmd{(regularity requirement from the energy conservation)} 
\end{equation}
\begin{equation}
    \frac{2\sigma}{q_{\theta}}+\frac{d}{r_{\theta}}=\frac{d}{2}-s,\quad \frac{2\sigma}{q_{\theta}}+\frac{d}{\tilde{r}_{\theta}}+\frac{2\sigma}{\tilde{q}_{\theta}}+\frac{d}{r_{\theta}}=d, \quad \textmd{(Strichartz exponent relations)}
\end{equation}
\begin{equation}
    \frac{1}{(p+1)\tilde{q_{\theta}}'}=\frac{\theta}{q_{\theta}}, \quad \frac{1}{(p+1)\tilde{r_{\theta}}'}=\frac{\theta}{r_{\theta}}+\frac{2(1-\theta)}{pd},\quad \textmd{(the H\"older inequality, or say, interpolation)}
\end{equation}
and
\begin{equation}
    \frac{2\sigma}{l}+\frac{d}{m}=\frac{d}{2}, \quad \frac{1}{m'}=\frac{1}{m}+\frac{p}{r_{\theta}},\quad \frac{1}{l'}=\frac{1}{l}+\frac{p}{q_{\theta}} \quad \textmd{(Strichartz exponent relations and the H\"older inequality)}.
\end{equation}
\end{proof}

\subsection{Step 4: Proof of the scattering asymptotics}
In fact by using the integral equation, it is sufficient to prove that
\begin{equation}
 \lim_{t_1,t_2\rightarrow \infty}\norm{\int_{t_1}^{t_2}e^{-is((-\Delta_{x})^{\sigma}-\partial_{y}^{2\sigma})}(|u|^pu) \, ds  }_{H^{\sigma}_{x,y}  (\R^d \times \mathbb{T})}=0   .
\end{equation}

Moreover, using Strichartz estimates, we only need to show,
\begin{align}
  \lim_{t_1,t_2\rightarrow \infty} \left( \||u|^pu\|_{L_t^{l'}L_x^{m'}L^2_{y}([t_1,t_2]\times \mathbb{R}^d \times \mathbb{T})}+\||\nabla_x|^{\sigma}(|u|^pu)\|_{L_t^{l'}L_x^{m'}L^2_{y}([t_1,t_2]\times \mathbb{R}^d \times \mathbb{T})} \right.\\
\left.  +\||\partial_{y}|^{\sigma}(|u|^pu)\|_{L_t^{l'}L_x^{m'}L^2_{y}([t_1,t_2]\times \mathbb{R}^d \times \mathbb{T})} \right)=0. 
\end{align}
Noticing the two established estimates, the above limit follows in a straightforward way. Thus we proved scattering in the energy space.
\section{Further remarks}\label{sec Future}
In this section, we make a few more remarks on this research line, i.e. \emph{long time dynamics for dispersive equations on waveguide manifolds}. As mentioned in the introduction, this area has been developed a lot in recent decades. The authors are interested in this research line for several years. Though many theories/tools/results have been established, there are still many interesting open questions left. We list some interesting related problems in this line for interested readers.

1. \emph{The critical regime.} The cases we are considering in this paper are of a `double subcritical' nature \eqref{gNLS}. In fact, it is also quite interesting to consider the scattering theory for the critical regime. For example, 
\begin{equation}
     (i\partial_t+(-\Delta_{x})^{\sigma} + (-\partial_{y}^{2})^{\sigma})u= \mu |u|^{\frac{4\sigma}{d}}u,\quad u(0,x,y)= u_0(x,y)\in H^{\sigma}(\mathbb{R}^d\times \mathbb{T}),
\end{equation}
and
\begin{equation}
   (i\partial_t+(-\Delta_{x})^{\sigma}+ (-\partial_{y}^{2})^{\sigma})u= \mu |u|^{\frac{4\sigma}{d+1-2\sigma}}u,\quad u(0,x,y)= u_0(x,y) \in H^{\sigma}(\mathbb{R}^d\times \mathbb{T}).
\end{equation} 
The first one is of mass-critical nature and the second one is of energy-critical nature. New techniques are needed including function spaces, profile decomposition, profile approximations and even resonant systems. See \cite{CGZ,HP,Z1,Z2} for the NLS case. 

2. \emph{Improvements/generalizations for Theorem \ref{mainthm}.} One may also try to remove the radial assumption in Theorem \ref{mainthm} or consider \eqref{gNLS} on more general waveguide manifolds $\mathbb{R}^n \times  \mathbb{T}^m$. However, when $m \geq 2$, one can at most show the global well-posedness since the scattering is not expected to hold. Another point is to consider the cases $d=1,2$. Some more techniques are required, such as the Morawetz-type estimate in 1D and 2D (see Section \ref{sec Morawetz}).  

One may consider other problems for \eqref{gNLS} such as growth of Sobolev norms (weak turbulence) or low regularity type results (see \cite{ZhaoZheng} and the references therein).

3. \emph{Scattering for focusing NLS/4NLS/gNLS on waveguide manifolds.} The results discussed in this paper concern mainly the defocusing case. In general, large data scattering for the focusing NLS (or other dispersive equations) on waveguides is comparably less understood than the defocusing case. Threshold assumptions are necessary and new ingredients are needed to handle this type of problems. See \cite{YYZ} for a recent global well-posedness result, see \cite{dodson2019global,DHR,KM06,killip2010focusing} for the Euclidean results and see \cite{cheng2022global,luo2022large} for some very recent scattering result.

4. \emph{Critical NLS on higher dimensional waveguide manifolds.} For critical NLS (or other dispersive models) on waveguide manifolds, most of the models are lower dimensional (with no higher than four whole dimensions), which leads to quintic or cubic nonlinearity. This gives one advantage to applying function spaces to deal with nonlinearity. In general, the difficulty of the critical NLS problem on $\mathbb{R}^m \times \mathbb{T}^n$ increases if the dimension $m+n$ is increased or if the number $m$ of copies of $\mathbb{R}$ is decreased (which is concluded in \cite{IPRT3}). There are no large data global results for critical NLS on waveguide manifolds with at least $5$ whole dimensions, to the best knowledge of the authors. Moreover, the Hartree analogues are also less understood. 

5. \emph{NLS on other product spaces.} Instead of waveguide manifolds, one may consider dispersive equations on other types of product spaces, for example, $\mathbb{R}^d\times \mathbb{S}^n$ where $\mathbb{S}^n$ are n-dimensional spheres ($\mathbb{S}^n$ can be replaced by other manifolds). See \cite{pausader2014global} for a global well-posedness result of NLS on pure spheres. In this regime, NLS may be a good model to start with. One can also replace $\mathbb{S}^n$ by other manifolds.

\bibliographystyle{plain}
\bibliography{BG4NLS}

\end{document}